\font\smallit=cmti10
\newtheorem*{lemma*}{Lemma}
\newtheorem*{proposition*}{Proposition}
\newtheorem*{corollary*}{Corollary}
\newtheorem*{definition*}{Definition}
\newtheorem{theorem}{Theorem}
\newtheorem{lemma}[theorem]{Lemma}
\newtheorem{claim}[theorem]{Claim}
\newtheorem{corollary}[theorem]{Corollary}
\newtheorem{proposition}[theorem]{Proposition}
\newtheorem{definition}{Definition}
\newtheorem{question}[theorem]{Open Question}
\newtheoremstyle{dotless}{}{}{\itshape}{}{\bfseries}{}{ }{}
\newtheorem*{nothing}{}
\def\cH{{\mathcal H}}
\def\cA{{\mathcal A}}
\def\cS{{\mathcal S}}
\def\cK{{\mathcal K}}
\def\cF{{\mathcal F}}
\def\cL{{\mathcal L}}
\def\cD{{\mathcal D}}
\def\R{{\mathbb R}}
\newcommand{\C}{\mathbb C}
\newcommand{\N}{\mathbb N}
\newcommand{\Z}{\mathbb Z}
\def\gcm{\mathop{gcm}}
\def\ln#1{\langle\langle #1\rangle\rangle}
\def\ls#1{\langle #1 \rangle}
\def\FP{\mbox{\cal FP}}
\def\ignor#1{}
\begin{document}

\begin{center}
\uppercase{\bf Approximations of groups, characterizations of sofic groups, 
and equations over groups.}
\vskip 20pt
{\bf Lev Glebsky}\\
{\smallit Instituto de Investigaci{\'o}n en Comunicaci\'on {\'O}ptica    
          Universidad Aut{\'o}noma de San Luis Potos{\'i} Av. Karakorum 1470, Lomas 4a 78210
          San Luis Potosi, Mexico}\\
{\tt glebsky@cactus.iico.uaslp.mx}\\ 
\end{center}

\begin{abstract}
We give new characterizations of sofic groups:

-- A group $G$ is sofic if and only if  it is a subgroup of  a quotient of a direct product of alternating or symmetric groups.

-- A group $G$ is sofic if and only if any system of equations solvable in all alternating groups is solvable over $G$.

The last characterization allows to express soficity of an existentially closed group by $\forall\exists$-sentences. 

\noindent {\bf Keywords:} sofic groups, approximations, equations over groups.
\end{abstract}

\section{Introduction} 

Sofic groups have been defined\footnote{Definition 3.5 of \cite{AGG} is the definition of sofic 
groups where the authors missed the separability conditions. All groups trivially satisfy 
this definition.} in \cite{Gr99, W00} 
in relation with the Gottschalk surjunctivity conjecture.
Hyperlinear groups have been introduced in \cite{Ra08} in relation with 
Connes' embedding conjecture.  
It is known that sofic groups are hyperlinear,
the reverse inclusion is an open question. 

Some famous group theory conjectures   
(Kervaire-Laudenbach, Gottschalk,
Connes' embedding conjectures) are established for sofic groups,
see \cite{Lupini1, Pestov1} and the references therein. It is also known that some important classes of groups are sofic,
for example, amenable, residually amenable, extensions of sofic groups by amenable groups, etc.,
\cite{EL, Lupini1, Pestov1}. 
An open question is whether or not all groups are sofic (respectively, hyperlinear).

Classically, sofic (resp. hyperlinear) groups are defined as being  metric approximable  by
symmetric groups (resp. finite-dimensional unitary groups), \cite{Thom1}. 
It is possible to define metric approximations by
different  classes of groups, see \cite{Arzhan2} and Definition~\ref{def_m-approx} 
of the present paper. 
We call them $(\cK,\cL)$-approximations, where $\cK$ is a class of groups and 
$\cL$ is a class of invariant length functions on these groups. 
Choosing the $\cK$ and $\cL$
one defines different classes of groups as $(\cK,\cL)$-approximable groups. 
For example, weakly sofic groups, \cite{GR}, 
and linear sofic groups, \cite{Arzhan1}, for $(\cK,\cL)=(\{\mbox{all finite groups}\},\{\mbox{all length functions}\})$ and
$(\cK, \cL)=(\{GL_n(\C)\},\{\mbox{normalized rank}\})$, respectively.
The following result  is classical here, see Proposition~\ref{prop_ultr} for details.
\begin{nothing}
 A group $G$ is 
$(\cK,\cL)$-approximable if and only if $G$ is isomorphic to a subgroup
of a metric ultraproduct with respect to $\cL$ of groups from $\cK$. 
\end{nothing}
By definition, the metric approximation
depends on invariant length functions and a class of groups. 
The structure of the set of invariant length functions on a group depends 
on the algebra of 
the conjugacy classes of this group. Indeed, an invariant length function $\|\cdot\|$, see Definition~\ref{def_len},
is constant along conjugacy classes and satisfies inequality $\|g_3\|\leq \|g_1\|+\|g_2\|$ for 
$g_3\in C(g_1)C(g_2)$, where $C(g)$ is the conjugacy class of $g$.

In the present article we 
investigate the notion of approximation based on products of conjugacy classes 
without direct use of length functions, see Definition~\ref{def_w-approx}. 
Such approximations will be called $\cK$-approximations\footnote{Some other types 
of approximations are defined in \cite{AGG,Arzhan2}.}. A group possessing $\cK$-approximation will
be called $\cK$-approximable group.
Let $Sym$, $Alt$, $Nil$, $Sol$, $Fin$ be the classes of finite symmetric,
finite alternating, finite nilpotent, finite solvable and all finite groups, 
respectively. We show that the classes of $Alt$-approximable groups, 
$Sym$-approximable groups, and sofic groups coincide. 
$Fin$-approximable groups are called weakly sofic \cite{GR}. 

Proposition~\ref{prop_ultr} with Lemma~\ref{lm_w_and_m} implies that a
$\cK$-approximable group is isomorphic to a subgroup of a quotient of an
(unrestricted) direct product of groups from $\cK$. 
Is it true that any quotient of
any direct product of groups from $\cK$ is $\cK$-approximable?
We don't know the answer to the question but we answer it  affirmatively  if 
$\cK$ is a class of compact groups satisfying
$\cK=\prod(\cK)$, where $\prod(\cK)$ 
is a class of all finite 
direct products of groups from $\cK$. The question has an affirmative answer for $\cK=Sym$ and $Alt$ also. 
We start by proving that for a class of compact
groups $\cK$ a quotient of a  direct product $G=\prod\limits_{i\in\N} H_i$, $H_i\in \cK$,
is $\prod(\cK)$-approximable, see Proposition~\ref{prop_direct}. 
Then we show that $\cK.approx=\prod(\cK).approx$ for
$\cK=Alt$ and $Sym$, Proposition~\ref{sofic_without_metric}. 
The property $\cK.approx=\prod(\cK).approx$ allow us to give the
following characterization of $\cK$-approximable groups.
\begin{nothing}
Let $F=\ls{a_1,\dots,a_r}$ be a finitely generated free group and $N\triangleleft F$. 
Then $F/N$ is $Nil$, $Sol$, $Fin$-approximable if and only if
$N=\langle\langle N\rangle\rangle_{\hat F}\cap F$. 
Here $F\hookrightarrow \hat F$ and
$\hat F$ is the pro-nilpotent, pro-solvable, pro-finite completion of $F$, 
respectively.
$\langle\langle N\rangle \rangle_{\hat F}$ denotes the minimal normal subgroup
of the corresponding $\hat F$ containing $N$. (Notice, that $\langle\langle N\rangle \rangle_{\hat F}$ need 
not to be topologically closed.)
\end{nothing}

The similar characterization for sofic groups has a peculiarity due to 
the fact that
$Alt$ and $Sym$ are not closed under taking subgroups. Still, one can define 
$F\hookrightarrow \cA$ (resp. $F\hookrightarrow \cS$) in such a way that $F/N$ is
sofic if and only if $N=\ln{N}_\cA\cap F$ (resp. $N=\ln{N}_\cS\cap F$). The group 
$\cA$, for example, is a direct product of alternating groups,
every alternating group $A_m$ appears finitely many times, one for each 
homomorphism $\phi:F\to A_m$, 
with the inclusion $F\hookrightarrow\cA$ naturally
defined by these $\phi$'s.  Notice, that in contrast to the above statement, the closure
of $F$ (in the product Tychonoff topology) is not the whole $\cA$. In fact, this closure is
isomorphic to the pro-finite completion of $F$. The construction of $\cS$ is similar the only difference that instead of 
alternating groups $A_m$ one should take symmetric groups $S_m$. 

We may reformulate the above characterization using equations over groups.
Let $\bar a=(a_1,\dots,a_r)$ and $\bar x=(x_1,\dots,x_k)$ be the symbols for constants 
and variables, respectively. 
$|\bar a|=r$ and $|\bar x|=k$. Let $\ls{\bar a,\bar x}$ be a free group freely  generated by
$\bar a$ and $\bar x$. Let
$w_i\in\ls{\bar a,\bar x}$ and 
$\bar w=(w_1,\dots, w_n)$. 
Consider the system of equations $\bar w=1$ ($w_1=1,\dots, w_n=1$). 
\begin{definition}\label{def_sys}
We say that $\bar w $ is solvable in a group $G$ 
if the sentences 
$$
\forall \bar a \exists \bar x \bigwedge_{i=1}^nw_i(\bar a,\bar x)=1 
$$  
is valid in $G$.  
We say that a system $\bar w$ is solvable over group
$G$ if for some $H>G$ 
$$
\forall \bar a\in G^r\; \exists \bar x\in H^k\; \bigwedge_{i=1}^nw_i(\bar a,\bar x)=1 
$$
\end{definition} 
Denote by $Sys(G)$ the set of all finite systems of equations solvable in $G$.  For a class $\cK$ let 
$Sys(\cK)=\bigcap\limits_{G\in\cK} Sys(G)$.
In Section~\ref{sec_equation} the following proposition is proved.
\begin{proposition*}
Let $\cK=Nil, Sol, Fin, Alt$ or $Sym$. Then a group $G$
is $\cK$-approximable if and only if all systems $\bar w \in Sys(\cK)$ 
are solvable over $G$.
\end{proposition*}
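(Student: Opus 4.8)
The plan is to prove the two implications separately; the forward one will hold for an arbitrary class $\cK$, and only the converse will use that $\cK$ is one of the five listed classes, through the structural descriptions of $\cK$-approximable groups recalled above.

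For the forward implication, suppose $G$ is $\cK$-approximable. By Proposition~\ref{prop_ultr} together with Lemma~\ref{lm_w_and_m}, $G$ embeds in a quotient $P/M$ of an unrestricted direct product $P=\prod_{i\in I}K_i$ with every $K_i\in\cK$, and I take $H=P/M\supseteq G$ as the required overgroup. Given $\bar w=(w_1,\dots,w_n)\in Sys(\cK)$ and an arbitrary $\bar a\in G^r\subseteq (P/M)^r$, lift $\bar a$ to $\bar A\in P^r$ and solve $\bar w$ coordinatewise: since $\bar w$ is solvable in each $K_i\in\cK$, for every $i$ there is $\bar X^{(i)}\in K_i^k$ with $w_l(\bar A^{(i)},\bar X^{(i)})=1$ for all $l$; assembling the $\bar X^{(i)}$ into $\bar X\in P^k$ gives $w_l(\bar A,\bar X)=1$ in $P$, hence $w_l(\bar a,\bar XM)=1$ in $H$ for all $l$. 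As $\bar a$ was arbitrary in $G^r$, $\bar w$ is solvable over $G$. Note that this half uses nothing about $\cK$ beyond the embedding of a $\cK$-approximable group into a quotient of an unrestricted product.

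For the converse I would argue by contraposition: assuming $G$ is not $\cK$-approximable, I produce a system lying in $Sys(\cK)$ that is not solvable over $G$. First reduce to the case that $G$ is finitely generated --- $\cK$-approximability is detected on finitely generated subgroups, and a system solvable over $G$ is solvable over every subgroup of $G$ with the same overgroup --- and write $G=F/N$ with $F=\ls{a_1,\dots,a_r}$ free. By the normal-closure description recalled above, the failure of $\cK$-approximability means $N\subsetneq\ln{N}_{\mathcal R}\cap F$, where $\mathcal R=\hat F$ is the pro-nilpotent, pro-solvable or profinite completion of $F$ when $\cK=Nil,Sol,Fin$, and $\mathcal R=\cA$, resp.\ $\mathcal R=\cS$, when $\cK=Alt$, resp.\ $Sym$. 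Fix $f\in(\ln{N}_{\mathcal R}\cap F)\setminus N$ and, by the very definition of the normal closure, write
\[
f=\prod_{j=1}^{p}c_j^{-1}\nu_j c_j,\qquad \nu_1,\dots,\nu_p\in N,\ \ c_1,\dots,c_p\in\mathcal R .
\]
Reading $\nu_j$ and $f$ as words in $a_1,\dots,a_r$ and introducing fresh variables $\bar x=(x_1,\dots,x_p)$, form the single equation
\[
w(\bar a,\bar x)=\Bigl(\prod_{j=1}^{p}x_j^{-1}\nu_j(\bar a)x_j\Bigr)f(\bar a)^{-1}=1 .
\]

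It remains to check that $w\in Sys(\cK)$ and that $w$ is not solvable over $G$; I expect the non-solvability to be immediate and membership in $Sys(\cK)$ to be the only point needing a moment's thought. For $w\in Sys(\cK)$: take $K\in\cK$ and $\bar\alpha\in K^r$; the homomorphism $\phi\colon F\to K$ with $\phi(\bar a)=\bar\alpha$ comes from $\mathcal R$ --- from $\hat\phi\colon\hat F\to K$ by the universal property of the completion (each such $K$ being finite, $\phi$ factors through a finite quotient of $F$), or, for $\cK=Alt,Sym$, as the coordinate projection $\pi_\phi$ of $\cA$, resp.\ $\cS$, onto the factor indexed by $\phi$, which restricts to $\phi$ on $F$. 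Applying this map to the factorization of $f$ and setting $y_j:=\hat\phi(c_j)$, resp.\ $\pi_\phi(c_j)$, gives $f(\bar\alpha)=\prod_{j=1}^{p}y_j^{-1}\nu_j(\bar\alpha)y_j$ in $K$, i.e.\ $w(\bar\alpha,\bar y)=1$; the key point is that one fixed finite expression for $f$ over $\mathcal R$ --- same $p$, same $\nu_j$, only the $y_j$ varying --- witnesses this for all $K$ at once. Hence $K\models\forall\bar a\,\exists\bar x\ w(\bar a,\bar x)=1$ for every $K\in\cK$, so $w\in Sys(\cK)$. For non-solvability over $G$: were $w$ solvable over $G$, witnessed by some $H>G$, then plugging in the canonical generating tuple $\bar g$ of $G$ would yield $\bar y\in H^p$ with $\bigl(\prod_j y_j^{-1}\nu_j(\bar g)y_j\bigr)f(\bar g)^{-1}=1$; but $\nu_j\in N$ forces $\nu_j(\bar g)=1$ in $G\subseteq H$, so the relation collapses to $f(\bar g)=1$ in $H$, hence in $G$, i.e.\ $f\in N$, contradicting the choice of $f$.

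The substantive content lies not in this argument but in the two facts imported from earlier in the paper --- the equivalence of $\cK$-approximability with being a subgroup of a quotient of an unrestricted product of $\cK$-groups, and the equivalence of $\cK$-approximability of $F/N$ with $N=\ln{N}_{\mathcal R}\cap F$. Granting those, the points requiring some care are the reduction to finitely generated $G$ in the converse (so that the finitely generated forms of those equivalences apply) and the elementary remark that a homomorphism from $F$ to a member of one of the five classes comes from $\mathcal R$ --- via the universal property for $Nil,Sol,Fin$ and via a coordinate projection for $Alt,Sym$ --- which is routine exactly because all five classes consist of finite groups. Only the definitional direction of the correspondence between normal closures and finite products of conjugates is ever used, so there is no hidden work there.
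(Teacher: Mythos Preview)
Your proof is correct and follows essentially the same route as the paper. The forward direction is exactly Proposition~\ref{prop_sys1}; for the converse, the paper cites Lemma~\ref{lm_fp}, Lemma~\ref{lm_sys1} and Theorem~\ref{th_approx1}, while you unfold these by hand: your explicit factorization $f=\prod_j c_j^{-1}\nu_j c_j$ in $\mathcal R$ is precisely the content of Lemma~\ref{lm_fp} applied to $\bar N^{\mathcal R}$, your single equation $w$ is the presentation of the resulting $H\in\FP(\mathcal R)$, and your verification that $w\in Sys(\cK)$ via extending/projecting homomorphisms is Lemma~\ref{lm_sys1}. The only cosmetic difference is that the paper's $\bar w$ is a priori a system while you produce a single equation, which is of course allowed.
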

{\bf Remark.} 
\begin{itemize}
\item It is not clear if the sets $Sys(\cK)$ are recursive.     
\item Let $F$ be a noncommutative  free group. By the solution of Tarski problem 
$Sys(F)$ is independent of $F$ and recursive, \cite{Khar_Myas, Sela}. It easily follows that
$Sys(F)\subset Sys(G)$ for any group $G$. So, for our purpose $Sys(F)$ are ``trivial equations'' and it suffices to consider 
$Sys'(\cK)=Sys(\cK)\setminus Sys(F)$.   
\item $Sys(Fin)\neq\emptyset$, see \cite{Coulbois}.
\end{itemize}
Although the characterization looks impractical, we can derive some consequences.

 Paul Schupp suggests 
that existentially closed groups are non sofic \cite{Gordon}. The existentially closed groups are analogues of algebraically closed 
fields, see \cite{Higman}. 
The following proposition
supports this hypothesis.
\begin{proposition*}
Let $\cK=Nil, Sol, Alt, Sym$, or  $Fin$. An existentially closed group 
is $\cK$-approximable if
and only if all groups are $\cK$-approximable.
\end{proposition*}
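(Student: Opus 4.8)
The plan is to combine the previous proposition (the equational characterization of $\cK$-approximability: $G$ is $\cK$-approximable iff every $\bar w\in Sys(\cK)$ is solvable \emph{over} $G$) with the defining property of existentially closed groups, namely that a system of equations with constants from an existentially closed group $E$ is solvable \emph{over} $E$ if and only if it is already solvable \emph{in} $E$. First I would recall that ``solvable over $G$'' means there is an overgroup $H>G$ in which, for every choice of the constants $\bar a$ in $G$, the system $\bar w(\bar a,\bar x)=1$ has a solution $\bar x$ in $H$; when $G=E$ is existentially closed, one may embed $H$ (or rather the relevant finitely generated pieces) back into $E$, since an existentially closed group absorbs all finitely generated extensions compatibly with quantifier-free formulas. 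Thus for an existentially closed $E$ the condition ``every $\bar w\in Sys(\cK)$ is solvable over $E$'' collapses to ``every $\bar w\in Sys(\cK)$ is solvable \emph{in} $E$'', i.e. to $Sys(\cK)\subseteq Sys(E)$.

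Next I would pin down the direction that carries the content: if \emph{all} groups are $\cK$-approximable, then in particular $E$ is, so one direction is immediate from the equational characterization. For the converse, suppose $E$ is existentially closed and $\cK$-approximable; by the above collapse, $Sys(\cK)\subseteq Sys(E)$. I want to conclude that $Sys(\cK)\subseteq Sys(H')$ (solvable \emph{over}) for an \emph{arbitrary} group $G'$, which by the equational characterization gives that $G'$ is $\cK$-approximable. The key move is that every group $G'$ embeds into some existentially closed group $E'$ (existentially closed groups are cofinal under embedding, by the usual transfinite construction in \cite{Higman}), and, crucially, $Sys(E)=Sys(E')$ for \emph{any} two existentially closed groups $E,E'$, because existentially closed groups all satisfy the same $\forall\exists$-sentences — each system $\bar w=1$, ``solvable in $G$'', is exactly the $\forall\exists$-sentence $\forall\bar a\,\exists\bar x\,\bigwedge w_i=1$, and the class of existentially closed groups is elementarily determined at the $\forall\exists$-level (indeed existentially closed groups form an $\forall\exists$-elementary... more precisely, any two are $\exists$-equivalent, hence agree on such sentences). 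Therefore $Sys(\cK)\subseteq Sys(E)=Sys(E')$, so every $\bar w\in Sys(\cK)$ is solvable in $E'$, hence a fortiori solvable over $G'$ (take $H=E'\geq G'$). By the equational characterization $G'$ is $\cK$-approximable, and since $G'$ was arbitrary, all groups are $\cK$-approximable.

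The step I expect to be the main obstacle is the claim that $Sys(E)$ is the same for all existentially closed groups $E$ — i.e. that ``solvability in'' a system of equations, as a $\forall\exists$-property, does not distinguish existentially closed groups. One must be careful: existentially closed groups are \emph{not} all elementarily equivalent, so I cannot invoke full elementary equivalence; what is true and sufficient is that they share all $\exists$-sentences (they are the ``algebraically/existentially closed'' objects, so every existential sentence true in some group is true in every existentially closed group — this is exactly Definition-level content from \cite{Higman}), and then a $\forall\exists$-sentence $\forall\bar a\,\exists\bar x\,\varphi$ holds in an existentially closed $E$ iff for every finitely generated $G_0\leq E$ generated by a tuple $\bar a$ the existential sentence (with parameters, absorbed by enlarging the finitely generated subgroup and using existential closedness) $\exists\bar x\,\varphi(\bar a,\bar x)$ holds — and this in turn is governed by whether the system is solvable over \emph{some} group containing $G_0$, i.e. by $Sys$ of the finitely generated subgroups, which is the same data across all existentially closed groups. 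Making this last equivalence precise, handling the parameters $\bar a$ correctly, and citing the right statements from \cite{Higman} (and possibly \cite{Gordon}) is where the care is needed; the rest is a direct application of the two preceding propositions.
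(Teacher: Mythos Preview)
Your proposal is correct and follows essentially the same route as the paper: both hinge on the equational characterization, the collapse of ``solvable over'' to ``solvable in'' for an existentially closed group, embedding an arbitrary group into an existentially closed one, and the fact that all existentially closed groups satisfy the same $\forall\exists$-sentences. The obstacle you flag is handled in the paper simply by citing Corollary~9.6 of \cite{Higman}, so your tentative bootstrap from shared $\exists$-sentences (which, as you suspect, does not by itself yield $\forall\exists$-agreement) is unnecessary; the only cosmetic difference is that the paper runs the converse by contrapositive---a non-$\cK$-approximable group yields a finitely generated one by locality, which embeds in an existentially closed group that is then non-$\cK$-approximable by subgroup-closure---whereas you argue directly by taking an existentially closed overgroup $E'\geq G'$ as the witness for ``solvable over''.
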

\begin{proof}
An existentially closed group $G$ is $\cK$ -approximable if and only if the sentences
$$
\forall \bar a \exists \bar x \bigwedge_{w\in\bar w}w(\bar a, \bar x)
$$
are valid in $G$ for any $\bar w\in Sys(\cK)$. A $\forall\exists$-sentence 
has the same truth
value in every existentially closed group (Corollary 9.6, page 121, \cite{Higman}).
So, all existentially closed groups simultaneously either $\cK$-approximable or not. 
If there
were a non $\cK$-approximable group, then  
a finitely generated non $\cK$-approximable group  would exist. ($\cK$-approximability 
is a local property.) 
One may embed this group into an existentially closed group $E$ (\cite{Higman}, page 6 and proof of Theorem~3.10). 
As $\cK$-approximability is closed under taking subgroups, $E$ is not $\cK$-approximable.
So, all existentially 
closed groups are non $\cK$-approximable.
\end{proof} 
Another application is related to
algebraic groups over algebraically closed fields. The following statement is Proposition~\ref{prop_Fin->AGCF} of
Section~\ref{sec_algebraic_groups}.
\begin{nothing}
Any  $\bar w\in Sys(Fin)$ is solvable in any algebraic group over an algebraically 
closed field.
\end{nothing}
It implies 
\begin{nothing}
Let $\cK$ be a class of algebraic groups over algebraically closed fields (fields may be different for different groups from $\cK$). 
Then 
$\cK$-approximable groups are $Fin$-approximable, that is, weakly sofic. 
\end{nothing}
This corollary is a generalization of a result of 
\cite{Arzhan1}: ``all linear sofic groups are weakly sofic''.

A subgroup $H<G$ has the congruence extension property (CEP) 
if any $N\triangleleft H$ satisfies
$N=H\cap \ln{N}_G$. A subgroup $H<G$ almost has CEP if there exists a finite set 
$\cF\not\ni 1$ such that any $N\triangleleft H$, $N\cap \cF=\emptyset$, satisfies
$N=H\cap \ln{N}_G$. Some authors say that $H$ is a normal convex subgroup of $G$ if
$H<G$ has the CEP, \cite{Stal, Brick1, Brick2}. A group $G$ is called SQ-universal if
any countable group injects into a quotien of $G$. 

\begin{proposition*}[{\bf Some equivalence}]
Let $F=\ls{x,y}$ be a free group of rank 2. Let $\cK=Nil, Sol, Fin, Alt, Sym$ and
$F\hookrightarrow\hat F_\cK$ be pro-nilpotent, pro-solvable, pro-finite completion of $F$, 
$\cA$ or $\cS$, 
respectively.
Then the following are equivalent.
\begin{enumerate}
\item All groups are $\cK$-approximable.
\item $F$ satisfies CEP in $\hat F_\cK$.
\item $F$ almost satisfy CEP in $\hat F_\cK$.
\item $\hat F_\cK$ is SQ-universal.
\end{enumerate} 
\end{proposition*}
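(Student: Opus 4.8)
We would establish the equivalences by the two cycles $(1)\Leftrightarrow(2)\Rightarrow(3)\Rightarrow(1)$ and $(2)\Rightarrow(4)\Rightarrow(1)$, using three inputs throughout: the characterization obtained above, that $F/N$ is $\cK$-approximable iff $N=F\cap\ln{N}_{\hat F_\cK}$; the facts that $\cK$-approximability is inherited by subgroups and is local, that every countable group embeds in a $2$-generated group, and that $F=\ls{x,y}$ is SQ-universal (the last two being Higman--Neumann--Neumann); and the result of the earlier sections that every quotient of a direct product of groups from $\cK$ is $\cK$-approximable.

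For $(1)\Leftrightarrow(2)$: one always has $N\subseteq F\cap\ln{N}_{\hat F_\cK}$, so $(1)$, which makes every $2$-generated group $F/N$ $\cK$-approximable, forces $N=F\cap\ln{N}_{\hat F_\cK}$ for every $N\triangleleft F$, i.e. CEP; conversely CEP makes every $2$-generated group $\cK$-approximable, hence -- passing to subgroups, then to direct limits -- every group. The implication $(2)\Rightarrow(3)$ is trivial. For $(3)\Rightarrow(1)$: enlarging $\cF$ preserves $(3)$, so we may take $\cF=B_L\setminus\{1\}$ for some radius $L$, and then $(3)$ says precisely that every $2$-generated group admitting a presentation $\ls{x,y\mid\,\cdot\,}$ whose relation subgroup meets $B_L$ only in $1$ is $\cK$-approximable; I would then embed an arbitrary countable group into such a group -- obtained by a small-cancellation construction with relators much longer than $L$, so that by Greendlinger's lemma its relation subgroup contains no non-trivial short word -- and conclude, by subgroup-closure and locality, that every group is $\cK$-approximable.

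For $(2)\Rightarrow(4)$: given a countable $C$, SQ-universality of $F$ gives $C\hookrightarrow F/N$, and CEP identifies $F/N$ with $F\ln{N}_{\hat F_\cK}/\ln{N}_{\hat F_\cK}$, a subgroup of the quotient $\hat F_\cK/\ln{N}_{\hat F_\cK}$; so $\hat F_\cK$ is SQ-universal. For $(4)\Rightarrow(1)$: it is enough to show every quotient of $\hat F_\cK$ is $\cK$-approximable, for then SQ-universality places every countable group inside such a quotient; and by the cited result this reduces to exhibiting $\hat F_\cK$ as a quotient of a direct product of groups from $\cK$. For $\cK=Alt,Sym$ this is immediate, $\hat F_\cK$ being the direct product $\cA$, resp. $\cS$, of alternating, resp. symmetric, groups. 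For $\cK=Nil,Sol,Fin$, write $\hat F_\cK=\varprojlim_n G_n$ along a cofinal chain of finite quotients of $F$ from $\cK$, with bonding maps $q_{n,m}\colon G_m\to G_n$; fixing a non-principal ultrafilter $\mathcal U$ on $\N$, one checks that $(g_n)_n\mapsto\bigl(\mathcal U\text{-}\lim_m q_{n,m}(g_m)\bigr)_n$ is a homomorphism $\prod_n G_n\to\hat F_\cK$ which is onto, since it fixes every coherent sequence $(h_n)_n\in\hat F_\cK\subseteq\prod_n G_n$. Thus $\hat F_\cK$ is an abstract quotient of $\prod_n G_n$, a direct product of groups from $\cK$.

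The step requiring genuine outside input -- and, I expect, the main obstacle -- is the embedding in $(3)\Rightarrow(1)$: realizing an arbitrary countable group inside a $2$-generated group presented by long relators, keeping the embedding injective while keeping all short words non-trivial, is exactly graphical small cancellation over free products, and some care is needed to obtain a presentation on precisely two generators with a prescribed lower bound on relation length. The other steps are routine given the cited propositions; the only further point worth isolating is the ultrafilter description of $\hat F_\cK$ in the $Nil$, $Sol$, $Fin$ cases -- the phenomenon is already present for $F=\ls{x}$, where $\hat\Z$ is an abstract quotient (in fact a direct summand) of $\prod_n\Z/n!\Z$.
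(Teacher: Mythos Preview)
Your proposal is correct, but it organizes the argument differently from the paper and uses different tools at the two non-trivial steps.

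The paper runs a single cycle $(1)\Rightarrow(2)\Rightarrow(3)\Rightarrow(4)\Rightarrow(1)$. For $(3)\Rightarrow(4)$ it does essentially the same small-cancellation work you propose for $(3)\Rightarrow(1)$, but packages it more cleanly: rather than embedding an arbitrary countable group directly into some $F/N$ with $N\cap\cF=\emptyset$, it invokes Ol'shanskii to produce a free rank-$2$ subgroup $H<F$ with CEP in $F$ and with $\ln{H}_F\cap\cF=\emptyset$; almost-CEP of $F$ in $\hat F_\cK$ then promotes CEP of $H$ in $F$ to CEP of $H$ in $\hat F_\cK$, and SQ-universality of $\hat F_\cK$ follows at once from SQ-universality of $H$. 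This sidesteps the issue you flag about controlling a presentation on two generators with long relators---the CEP subgroup absorbs that difficulty. Your direct embedding would in fact go through via exactly this $H$ (take any $M\triangleleft H$ with $C\hookrightarrow H/M$ and set $N=\ln{M}_F$), so the underlying input is the same.

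For $(4)\Rightarrow(1)$ the paper avoids your ultrafilter realization of $\hat F_\cK$ as a quotient of a direct product: it instead uses the equations characterization (Corollary~\ref{cor_sys1}), observing that every $\bar w\in Sys(\cK)$ is solvable in $\hat F_\cK$ (by compactness of the inverse limit in the $Nil$, $Sol$, $Fin$ cases, and trivially in the $Alt$, $Sym$ cases), hence in every quotient of $\hat F_\cK$, hence over every countable group by SQ-universality. Your ultrafilter argument is valid and pleasant, but note that for $\cK=Nil,Sol,Fin$ it is unnecessary even within your own framework: the paper's Corollary~\ref{cor_prod1} already applies to any \emph{closed subgroup} of a direct product, and $\hat F_\cK$ is by construction such a subgroup, so you need not exhibit it as a quotient at all.
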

Similar proposition was proven by Goulnara Arzhantseva, Jakub Gismatullin and others 
\cite{Gis}. 
 It follows from construction of \cite{Howie}  that $F$ does not possess CEP in it's pro-nilpotent completion. 
In fact, results of \cite{Howie}  
imply that any finitely generated perfect group is not $Nil$-approximable. 
The question ``if all groups are $Sol$-approximable'' seems to be open. 

We use the following notations:
$F<G$ denotes ``$F$ is a subgroup of $G$''. $F\triangleleft G$ denotes ``$F$ 
is a normal subgroup of $G$''.
Let $X\subseteq F$ and $F<G$. Then $\ls{X}$ denotes ``the subgroup generated by $X$''; 
$\ln{X}_F$ denotes ``the normal subgroup of $F$, generated by $X$''. 
If a group containing $\bar a=(a_1,\dots,a_k)$ is not assumed then
$\ls{\bar a}$ denotes the free group freely generated by $\bar a$.
As usual, $X\subset_{fin}Y$ means ``$X$ is a finite subset of $Y$''. 
For two sets $A$ and $B$ let $A\setminus B=\{a\in A\;|\;a\not\in B\}$.
Given $N\triangleleft G$, as usual,  $G/N$ denotes the quotient of $G$ by $N$.
\section{Approximation of groups}
\subsection{Metric approximation}
In this subsection we define metric approximation, the $(\cK,\cL)$-approximation, of groups.
We follow the lines of \cite{Arzhan2, Lupini1, Pestov1, Thom1}. 
\begin{definition}\label{def_len}
Let $G$ be a group. An invariant (pseudo) length function is a map 
$\|\cdot\|:G\to [0,\infty [$ such that $\forall g,h\in G$
\begin{itemize}
\item $\|1\|=0$
\item $\|gh\|\leq \|g\|+\|h\|$
\item $\|h^{-1}gh\|=\|g\|$ ($\|\cdot\|$ is invariant within a conjugacy class)
\end{itemize}
\end{definition}
(This is the definition of pseudo length function of \cite{Thom1}. The difference  
with the standard definition of the length function is not essential here, 
so we use just 
``length function''.)

\begin{definition}\label{def_m-approx}
Let $\cK$ be a class of groups and $\cL$ be a class of invariant length functions 
on groups from $\cK$. A group $G\in\cK$ may have several length functions in $\cL$.
Let $\cL_G$ be the set of length functions on $G$ in $\cL$.
All length functions are denoted by $\|\cdot\|$. 
We say that a group $G$ is $(\cK,\cL)$-approximable if 
\begin{itemize}
\item there exists $\alpha:G\to\R$, $\alpha_1=0$ and $\alpha_g>0$ for $g\neq 1$
\item for any $\Phi\subset_{fin} G$, for any $\epsilon>0$ there exist a function 
$\phi:\Phi\to H\in\cK$ and $\|\cdot\|\in\cL_H$  such that 
\begin{itemize}
\item $\phi(1)=1$
\item $\|\phi(g)\|\geq \alpha_g$ for any $g\in\Phi$
\item $\|\phi(gh)(\phi(g)\phi(h))^{-1}\|<\epsilon$ for any $g,h,gh\in\Phi$.
\end{itemize}
Let $(\cK,\cL).approx$ denote the class of $(\cK,\cL)$-approximable groups.
\end{itemize}
\end{definition}
Let $\omega$ be a non-principal ultrafilter  over $\N$, $H_i\in\cK$, 
and $\|\cdot\|_i\in\cL_{H_i}\subset\cL$. Let
$N\triangleleft\prod\limits_{i=1}^\infty H_i$ be defined as
$$
(h_1,h_2,\dots)\in N\;\;\Longleftrightarrow\;\;\lim_{\omega} \|h_i\|_i=0.
$$ 
Denote $\prod_\omega H_i=\prod_i H_i/N$, the metric ultraproduct of $H_i$ 
(with respect to $\|\cdot\|_i$ and $\omega$).
The following characterization of $(\cK,\cL).approx$ is standard \cite{Thom1, Lupini1, Pestov1}.
\begin{proposition}\label{prop_ultr}
$G\in (\cK,\cL).approx$ if and only if there exists an injection 
$G\hookrightarrow \prod_\omega H_n$ for some non principal ultrafilter $\omega$,
a sequence $H_1,H_2,\dots \subset \cK$, and a sequence of length functions
$\|\cdot\|_i\in\cL_{H_i}$.    
\end{proposition}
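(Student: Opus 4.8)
The plan is to prove both implications directly from Definition~\ref{def_m-approx} and the construction of $\prod_\omega H_i$, using the one property of a non-principal ultrafilter that a finite intersection of $\omega$-large sets is again $\omega$-large, hence nonempty.

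\textbf{From approximability to an embedding.} Assuming $G$ is $(\cK,\cL)$-approximable and (as is standard for an $\N$-indexed ultraproduct) countable, I would enumerate $G=\{g_1,g_2,\dots\}$ and set $\Phi_n=\{g_1,\dots,g_n\}$. Applying Definition~\ref{def_m-approx} with $\Phi=\Phi_n$ and $\epsilon=1/n$ yields $H_n\in\cK$, a length function $\|\cdot\|_n\in\cL_{H_n}$, and $\phi_n\colon\Phi_n\to H_n$. I would assemble these into one map $\Psi\colon G\to\prod_i H_i$ whose $n$-th coordinate at $g$ is $\phi_n(g)$ once $g\in\Phi_n$ (and $1$ before that), and let $\bar\Psi$ be the composite with the quotient $\prod_i H_i\to\prod_\omega H_i$. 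The homomorphism property is exactly the statement that $\phi_n(gh)(\phi_n(g)\phi_n(h))^{-1}$ has length $<1/n\to0$, so $\Psi(gh)(\Psi(g)\Psi(h))^{-1}\in N$; injectivity follows since for $g\neq1$ one has $\|\phi_n(g)\|_n\ge\alpha_g>0$ for all large $n$, whence $\Psi(g)\notin N$. Both verifications reduce to noting that the relevant index sets are cofinite, hence in $\omega$.

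\textbf{From an embedding to approximability.} Conversely, given an injective homomorphism $\iota\colon G\hookrightarrow\prod_\omega H_i$, I would fix a set-theoretic lift $g\mapsto\tilde\iota(g)=(h^g_i)_i\in\prod_i H_i$ with $\tilde\iota(1)=(1,1,\dots)$. Injectivity of $\iota$ gives $\tilde\iota(g)\notin N$ for $g\neq1$, i.e. $\lim_\omega\|h^g_i\|_i\neq0$; I would use this to choose, once and for all, a value $\alpha_g>0$ with $\{i:\|h^g_i\|_i\ge\alpha_g\}\in\omega$ (and $\alpha_1=0$), producing the single function $\alpha$ demanded by the definition. Then, for a given $\Phi\subset_{fin}G$ and $\epsilon>0$, the homomorphism property forces $\tilde\iota(gh)(\tilde\iota(g)\tilde\iota(h))^{-1}\in N$ for each triple $g,h,gh\in\Phi$, so every set $\{i:\|h^{gh}_i(h^g_ih^h_i)^{-1}\|_i<\epsilon\}$ lies in $\omega$; intersecting these finitely many sets with the finitely many sets $\{i:\|h^g_i\|_i\ge\alpha_g\}$ leaves an $\omega$-large, hence nonempty, index set. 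Choosing any $i_0$ in it and setting $\phi(g)=h^g_{i_0}$ into $H=H_{i_0}$ with length $\|\cdot\|_{i_0}$ verifies all three bullets simultaneously.

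\textbf{Main obstacle.} The homomorphism and injectivity bookkeeping is routine; the delicate points are elsewhere. First, Definition~\ref{def_m-approx} demands one function $\alpha$ valid for all $(\Phi,\epsilon)$ at once, which is why I extract $\alpha$ from the embedding before quantifying over finite sets. Second, a length function may be unbounded, so I deliberately avoid asserting that $\lim_\omega\|h^g_i\|_i$ is a finite real and instead argue only with $\omega$-largeness of $\{i:\|h^g_i\|_i\ge\alpha_g\}$, which is all the nonprincipality of $\omega$ guarantees. A secondary point worth flagging is the countability hypothesis used to index the approximations by $\N$ in the forward direction; for non-separable $G$ one would restrict to finitely generated subgroups or replace $\N$ by a directed index set, and I would remark on this rather than belabor it.
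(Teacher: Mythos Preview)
The paper does not actually prove Proposition~\ref{prop_ultr}; it simply labels the statement as ``standard'' and cites \cite{Thom1, Lupini1, Pestov1}. Your argument is correct and is precisely the standard proof one finds in those references: assemble the finite approximations into a single sequence and pass to the metric ultraproduct in one direction, and in the other direction pull back along a set-theoretic section and use finite intersections in $\omega$ to find a good index. Your care in extracting $\alpha$ \emph{before} quantifying over $(\Phi,\epsilon)$, in arranging $\tilde\iota(1)=1$, and in avoiding the assumption that $\lim_\omega\|h^g_i\|_i$ is finite are exactly the points where a careless write-up would stumble, and your remark on the countability hypothesis for the $\N$-indexed forward direction is the right disclaimer (the paper's statement, as written, tacitly assumes this or a larger index set).
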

\begin{definition}\label{def_metric_separ}
Let $N\triangleleft G$. We say that $N$ is $(\cK,\cL)$-separated normal subgroup of $G$
if for some $\alpha:G\setminus N\to \R^+=\{x\in \R\;|\;x>0\}$, for any 
$Y\subset_{fin}G\setminus N$, for any $\Phi\subset_{fin}N$, and for any $\epsilon>0$ 
there exists a homomorphism
$\phi:G\to H$, $H\in\cK$, and $\|\cdot\|\in\cL_H$ such that
\begin{itemize}
\item $\|\phi(y)\|>\alpha_y$ for $y\in Y$,
\item $\|\phi(x)\|<\epsilon$ for $x\in\Phi$.
\end{itemize}
\end{definition}
The following proposition is a modification of Proposition~1.7 in \cite{Thom1}, see also
Lemma~6.2 of \cite{GR}. 
\begin{proposition}\label{prop_esen}
If $N$ is a $(\cK,\cL)$-separated normal subgroup of $G$
then $G/N$ is $(\cK,\cL)$-approximable.

Let $F$ be a free group and  $N\triangleleft F$. If $F/N$ is $(\cK,\cL)$-approximable 
then $N$ is a $(\cK,\cL)$-separated normal subgroup of $F$.
\end{proposition}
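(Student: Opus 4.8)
The plan is to prove the two implications separately, following Proposition~1.7 of \cite{Thom1} (see also Lemma~6.2 of \cite{GR}). For the forward implication, assume $N$ is $(\cK,\cL)$-separated in $G$, witnessed by $\alpha:G\setminus N\to\R^+$. I would fix once and for all a set-theoretic section $s:G/N\to G$ of the quotient map with $s(1)=1$; since $s$ is a section, $s(\bar g)\notin N$ whenever $\bar g\ne 1$, so one may set $\bar\alpha_{\bar g}:=\alpha_{s(\bar g)}>0$ for $\bar g\ne 1$ and $\bar\alpha_1:=0$, and this is the witness for approximability of $G/N$. Given $\bar\Phi\subset_{fin}G/N$ and $\epsilon>0$, put $Y:=\{s(\bar g):\bar g\in\bar\Phi,\ \bar g\ne 1\}\subset_{fin}G\setminus N$ and let $\Phi\subset_{fin}N$ be the set of all elements $s(\bar g\bar h)^{-1}s(\bar g)s(\bar h)$ together with their inverses, over all $\bar g,\bar h,\bar g\bar h\in\bar\Phi$ (each of these lies in $N$ because $s$ is a section). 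Feeding $Y,\Phi,\epsilon$ into the separation hypothesis yields a homomorphism $\phi:G\to H\in\cK$ and $\|\cdot\|\in\cL_H$ with $\|\phi(y)\|>\alpha_y$ on $Y$ and $\|\phi(x)\|<\epsilon$ on $\Phi$. Then $\bar\phi:\bar\Phi\to H$, $\bar\phi(\bar g):=\phi(s(\bar g))$, is the required almost-homomorphism: $\bar\phi(1)=1$; $\|\bar\phi(\bar g)\|\ge\bar\alpha_{\bar g}$; and for $\bar g,\bar h,\bar g\bar h\in\bar\Phi$ the defect $\bar\phi(\bar g\bar h)\bigl(\bar\phi(\bar g)\bar\phi(\bar h)\bigr)^{-1}$ equals $\phi(t x^{-1}t^{-1})$ with $t:=s(\bar g\bar h)$ and $x:=s(\bar g\bar h)^{-1}s(\bar g)s(\bar h)\in N$, so by invariance of $\|\cdot\|$ its length is $\|\phi(x^{-1})\|<\epsilon$. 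This is precisely Definition~\ref{def_m-approx}, hence $G/N\in(\cK,\cL).approx$.

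For the converse, with $F$ free, I would invoke Proposition~\ref{prop_ultr}: approximability of $F/N$ gives an injection $\iota:F/N\hookrightarrow\prod_\omega H_n$ for a non-principal ultrafilter $\omega$, groups $H_n\in\cK$ and length functions $\|\cdot\|_n\in\cL_{H_n}$, where $\prod_\omega H_n=(\prod_n H_n)/M$ and $M=\{(h_n):\lim_\omega\|h_n\|_n=0\}$. Since $F$ is free, hence projective, the composite $F\to F/N\xrightarrow{\iota}\prod_\omega H_n$ lifts along the surjection $\prod_n H_n\twoheadrightarrow\prod_\omega H_n$ to a homomorphism $\Psi:F\to\prod_n H_n$, with components $\Psi_n:F\to H_n$ (themselves homomorphisms). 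Because $\iota$ is injective, for $f\in F$ one has $f\in N$ iff $\Psi(f)\in M$ iff $\lim_\omega\|\Psi_n(f)\|_n=0$; hence for $f\in F\setminus N$ the ultralimit $L_f:=\lim_\omega\|\Psi_n(f)\|_n$ is nonzero, and I would set $\alpha_f:=L_f/2$, with the convention $\alpha_f:=1$ when $L_f=\infty$ — a function of $f$ alone with values in $\R^+$. Now, given $Y\subset_{fin}F\setminus N$, $\Phi\subset_{fin}N$ and $\epsilon>0$: for each $y\in Y$ the set $\{n:\|\Psi_n(y)\|_n>\alpha_y\}$ lies in $\omega$ (since $\alpha_y<L_y=\lim_\omega$), and for each $x\in\Phi$ the set $\{n:\|\Psi_n(x)\|_n<\epsilon\}$ lies in $\omega$ (since $\lim_\omega\|\Psi_n(x)\|_n=0$); their intersection, being a finite intersection of members of $\omega$, is nonempty, so picking $n_0$ in it the homomorphism $\psi:=\Psi_{n_0}:F\to H_{n_0}\in\cK$ together with $\|\cdot\|_{n_0}\in\cL_{H_{n_0}}$ satisfies Definition~\ref{def_metric_separ}. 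Thus $N$ is $(\cK,\cL)$-separated in $F$.

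I expect the forward implication to be routine once the section $s$ is in place; the only point requiring care is that the multiplicative defect of the lifted map is, up to a conjugation that invariance of $\|\cdot\|$ absorbs, exactly $\phi$ of the element of $N$ recording the failure of $s$ to be a homomorphism — which is why the inverses of those elements must also be placed in $\Phi$. For the converse the essential ingredient is freeness: it is what allows the ultraproduct embedding to be lifted to an honest sequence of homomorphisms $\Psi_n:F\to H_n$, and for a non-free group this step has no reason to succeed — which is exactly why the converse is stated only for free $F$. After that it is just a finite-intersection computation inside $\omega$, with the factor $\tfrac12$ in the definition of $\alpha$ providing the slack needed because $\|\Psi_n(f)\|_n$ is controlled only through its ultralimit.
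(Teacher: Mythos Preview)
Your proof is correct. The paper does not actually supply a proof of this proposition; it only refers to Proposition~1.7 of \cite{Thom1} and Lemma~6.2 of \cite{GR}, together with the remark that freeness of $F$ is used ``to extend a map from generators of $F$ to a homomorphism required by Definition~\ref{def_metric_separ}.'' Your forward implication is exactly the standard section-plus-invariance argument, and your care in putting the inverses of the defects into $\Phi$ is warranted since Definition~\ref{def_len} does not postulate $\|g^{-1}\|=\|g\|$.

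For the converse you take a slightly different (but equally valid) route from what the paper's remark suggests. The hinted approach is to start directly from the almost-homomorphisms $\phi:\Phi\to H$ of Definition~\ref{def_m-approx}, define a genuine homomorphism $F\to H$ by sending the free generators of $F$ to their $\phi$-images, and then verify the separation conditions by bounding the defect on products of generators. You instead pass through the ultraproduct characterization (Proposition~\ref{prop_ultr}) and use freeness as projectivity to lift $F\to\prod_\omega H_n$ to $\Psi:F\to\prod_n H_n$; the components $\Psi_n$ are then the required homomorphisms, and a finite intersection inside $\omega$ selects the good index. Both arguments use freeness in essentially the same way (producing an honest homomorphism from data that is only approximately multiplicative), and your ultraproduct packaging has the advantage of making the definition of $\alpha_f$ transparent as (half of) an ultralimit, at the cost of invoking Proposition~\ref{prop_ultr}.
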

{\bf Remark.} The difference of Proposition~\ref{prop_esen} with Proposition~1.7 of 
\cite{Thom1} is 
that in the last one the group $G$ is assumed to be free. This assumption is 
not required for the first implication. In the second part of Proposition~\ref{prop_esen}
the freeness of $F$ is used to extend a map from generators of $F$ to a homomorphism 
required by Definition~\ref{def_metric_separ}.  

\subsection{$\cK$-approximation}
In this subsection we define the notion of $\cK$-approximation  which is 
independent of the length functions.  In what follows we use the notation $x^g=g^{-1}xg$.
\begin{definition}
Let $G$ be a group and $X\subseteq G$. Let 
$C_n(X,G)=\{x_1^{g_1}x_2^{g_2}\dots x_n^{g_n}\;|\;x_i^{\pm}\in X,\,g_i\in G\}$ (the set of
$n$-consequences of $X$ in $G$). We just write $C_n(X)$ if the group $G$ is uniquely 
assumed by a context.  

Let $X,Y\subset G$. We say that $Y$ is $n$-separated from $X$ (in $G$) if
$Y\cap C_n(X,G)=\emptyset$.
\end{definition}  
\begin{definition}\label{def_w-approx}
Let $\cK$ be a class of groups. A group $G$ is $\cK$-approximable
if for any $\Phi\subset_{fin}G$ for any $n\in\N$ there exist $H\in\cK$ and a map 
$\phi:\Phi\to H$ such that 
\begin{itemize}
\item $\phi(1)=1$;
\item the set $\phi(\Phi\setminus\{1\})$ is $n$-separated from 
$\{\phi(g)\phi(h)(\phi(gh))^{-1}\;|\;g,h,gh\in\Phi\}$.
\end{itemize}
Let $\cK.approx$ denote the class of $\cK$-approximable groups.
\end{definition}
If $G$ is $\cK$-approximable (resp. $(\cK,\cL)$-approximable)
then any subgroup of $G$ is  $\cK$-approximable (resp. $(\cK,\cL)$-approximable).
It is easy to  verify and we will often use it without mention explicitly.    
There is the following relation 
between $(\cK,\cL)$-approximation and $\cK$-approximation. 
\begin{lemma} \label{lm_w_and_m}
If a group $G$ 
is $(\cK,\cL)$-approximable then it
is $\cK$-approximable; if $G$ is $\cK$-approximable 
then one can define 
invariant length functions $\cL$ such that $G$ is $(\cK,\cL)$-approximable.
\end{lemma}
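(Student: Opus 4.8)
The plan is to prove the two implications of Lemma~\ref{lm_w_and_m} separately, exploiting the ultraproduct picture where convenient but keeping the arguments finitary so that the conclusions match the quantifier structure of Definitions~\ref{def_m-approx} and \ref{def_w-approx}.

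For the first implication, suppose $G$ is $(\cK,\cL)$-approximable, with the auxiliary function $\alpha:G\to\R$ as in Definition~\ref{def_m-approx}. Fix $\Phi\subset_{fin}G$ and $n\in\N$; we must produce $H\in\cK$ and $\phi:\Phi\to H$ witnessing $\cK$-approximability for $\Phi$ and $n$. The key observation is that if $\|\cdot\|$ is an invariant length function on $H$, then for $x_1,\dots,x_n\in H$ and arbitrary $g_1,\dots,g_n\in H$ one has $\|x_1^{g_1}\cdots x_n^{g_n}\|\le\sum_i\|x_i^{g_i}\|=\sum_i\|x_i\|$ by the triangle inequality and conjugation-invariance; hence if each defect $\phi(g)\phi(h)(\phi(gh))^{-1}$ has length $<\epsilon$, then any element of $C_n(\{\phi(g)\phi(h)(\phi(gh))^{-1}\;|\;g,h,gh\in\Phi\},H)$ has length $<n\epsilon$. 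So I would choose $\epsilon$ with $n\epsilon<\min\{\alpha_g\;|\;g\in\Phi\setminus\{1\}\}$, apply the definition of $(\cK,\cL)$-approximability to $\Phi$ and this $\epsilon$ to get $\phi:\Phi\to H$ and $\|\cdot\|\in\cL_H$, and then note $\|\phi(g)\|\ge\alpha_g>n\epsilon$ for $g\in\Phi\setminus\{1\}$, so $\phi(\Phi\setminus\{1\})$ is $n$-separated from the defect set. Since $\phi(1)=1$ is already part of the data, this is exactly Definition~\ref{def_w-approx}.

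For the converse, suppose $G$ is $\cK$-approximable; I want to equip the groups of $\cK$ with invariant length functions making $G$ be $(\cK,\cL)$-approximable. On any group $H\in\cK$ define the canonical invariant length $\|h\|_\cK=\inf\{\,1/(n+1)\;:\;h\in C_n(\{1\},H)\cdots\}$ — more precisely, I would set $\|h\|=1$ if $h$ is not a product of boundedly many conjugates of a fixed "small" set, and in general use the word-type metric generated by declaring a symmetric generating set; the cleanest choice is: pick the discrete length $\|h\|=1$ for $h\ne1$ and $\|1\|=0$, which is trivially invariant and satisfies the triangle inequality, but that is too coarse to transmit the $n$-separation data. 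Instead, the right construction is to let $\cL$ consist, for each $H\in\cK$ and each finite symmetric $S\subseteq H$ with $1\in S$, of the word-length $\|\cdot\|_S$ normalized so that elements of $C_n(S\setminus\{1\},H)$ have length at most $n/(n+1)<1$. Given $\Phi$ and $\epsilon>0$, choose $n$ with $1/(n+1)<\epsilon$, apply $\cK$-approximability of $G$ to $\Phi$ and this $n$ to obtain $\phi:\Phi\to H$ with $\phi(\Phi\setminus\{1\})$ $n$-separated from the defect set $D=\{\phi(g)\phi(h)(\phi(gh))^{-1}\}$, take $S=D\cup D^{-1}\cup\{1\}$, and use $\|\cdot\|_S$: then each defect has $\|\cdot\|_S\le 1/(n+1)<\epsilon$, while each $\phi(g)$, $g\ne1$, lies outside $C_n(S\setminus\{1\},H)$ and so has $\|\phi(g)\|_S$ bounded below by a positive constant $\alpha_g$ independent of the approximation (one checks $\|\phi(g)\|_S\ge n/(n+1)$ would be wrong; rather $\|\phi(g)\|_S>$ the value attainable within $n$ letters, hence $\ge$ a fixed positive number once we normalize correctly, e.g. $\|\cdot\|_S$ taking values in $\{0,1/(n+1),2/(n+1),\dots,1\}$ so the lower bound is $1$).

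The main obstacle is the converse direction, specifically engineering the class $\cL$ of length functions so that a single $\alpha:G\to\R$ works uniformly across all the finite subsets $\Phi$ and all $\epsilon$, since the length function used will vary with $\Phi$ and $\epsilon$ (its normalizing parameter $n$ and its generating set $S$ both depend on the approximation). I expect this is handled by making $\alpha$ the constant function $\alpha_g=1$ for $g\ne1$ and arranging every $\|\cdot\|_S\in\cL$ to be $\{0,1\}$-valued off the $n$-ball — i.e.\ truncating each word-length at $1$ — so that separation from the $n$-consequences automatically yields length exactly $1\ge\alpha_g$; then the triangle inequality and invariance of the truncated word-length need a short verification, and that verification, together with checking the truncated functions genuinely lie in a class $\cL$ of invariant length functions on $\cK$, is the only place where a little care is required.
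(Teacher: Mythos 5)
Your first implication is exactly the paper's argument: choose $\epsilon$ with $n\epsilon<\min\{\alpha_g\;|\;g\in\Phi\setminus\{1\}\}$, note that a product of $n$ conjugates of elements of small length has length $<n\epsilon$, and compare the two definitions; nothing to add there. Your converse is the paper's construction in outline (a normalized, truncated word-type length built from the defect set, with the constant function $\alpha_g=1$ for $g\neq 1$), but as literally written it has one genuine flaw: the ordinary word length $\|\cdot\|_S$ with respect to a finite symmetric set $S$ is \emph{not} conjugation-invariant, and no normalization of it makes elements of $C_n(S\setminus\{1\},H)$ have bounded length, since these are products of conjugates $s^h$ with $h$ ranging over all of $H$ (already $C_1(S,H)$ contains whole conjugacy classes, whose $S$-word length is typically unbounded or infinite). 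The repair, which is precisely what the paper does, is to generate the metric by the conjugation closure $X^H=\{x^h\;|\;x^{\pm1}\in X,\ h\in H\}$ of the defect set $X$: with $d$ the distance in the Cayley graph $\Gamma(H,X^H)$, set $\|h\|=\min\{\tfrac{1}{n}d(1,h),1\}$. This is subadditive and conjugation-invariant, equals $1/n<\epsilon$ on nontrivial defects, and equals $1$ on every element outside $C_n(X,H)$ — in particular on $\phi(g)$ for $g\in\Phi\setminus\{1\}$ by the $n$-separation — so with $\alpha\equiv 1$ off the identity your scheme goes through verbatim, $\cL$ being the class of all length functions arising this way.
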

\begin{proof}
Fix a group $H$ with an invariant length function $\|\cdot\|$, $\epsilon>0$ and $n\in\N$. Let
$X=\{h\in H\;|\; \|h\|<\epsilon\}$ and $Y=\{h\in H\;|\;\|h\|\geq n\epsilon\}$.
Then $Y$ is $n$-separated from $X$. Comparing Definition~\ref{def_m-approx} and
Definition~\ref{def_w-approx} gives us the first part of the Lemma.

Let $X,Y\subset H$ and $Y$ be $n$-separated from $X$. Denote by 
$X^H=\{x^h\;|\;x^{\pm 1}\in X,\,h\in H\}$. The Cayley graph $\Gamma(H,X^H)$ defines a 
distance
on $H$, $d(h_1,h_2)=$'the length of the shortest paths from $h_1,h_2$ in $\Gamma$ or $\infty$ if there is no such a path'.
It is easy to check that $\|h\|=\min\{\frac{1}{n}d(1,h),1\}$ defines an invariant length function
on $H$ such that $\|x\|=1/n$ for $x\in X$ and $\|y\|=1$ for $y\in Y$.  Now, one may
convert a $\cK$-approximation into a $(\cK,\cL)$-approximation  where $\cL$ consists of
the above constructed length functions.
\end{proof}
We give  analogues of Definition~\ref{def_metric_separ} and  
Proposition~\ref{prop_esen}.
\begin{definition}\label{def_separ}
Let $N\triangleleft G$. We say that $N$ is $\cK$-separated normal subgroup of 
$G$ if for any $n\in \N$, for any $Y\subset_{fin}G\setminus N$, and for
any $\Phi\subset_{fin}N$ there exists a homomorphism $\phi:G\to H$, $H\in\cK$,
such that $\phi(Y)$ is $n$-separated from $\phi(\Phi)$.
\end{definition}
\begin{proposition}\label{prop_weak_esen}
If $N$ is a $\cK$-separated normal subgroup of $G$, then $G/N$ is $\cK$-approximable. 

Let $N\triangleleft F$ for a free group $F$. If $F/N$ is $\cK$-approximable then $N$ is a
$\cK$-separated normal subgroup of $F$.
\end{proposition}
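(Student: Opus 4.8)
The plan is to mirror, in the length-free setting, the proof of Proposition~\ref{prop_esen}, using the dictionary between $\cK$-separatedness and $(\cK,\cL)$-separatedness supplied by the construction in the proof of Lemma~\ref{lm_w_and_m}. For the first implication, suppose $N\triangleleft G$ is $\cK$-separated. Given $\Phi\subset_{fin}G/N$ and $n\in\N$, I would lift $\Phi$ to a finite set $\tilde\Phi\subset G$ (choosing one preimage of each element, with the preimage of $1$ being $1$), and set $Y=\tilde\Phi\setminus N$ together with the finitely many "cocycle" elements $\{\tilde g\tilde h\,\widetilde{(gh)}^{-1}\;:\;g,h,gh\in\Phi\}$, which all lie in $N$; call this last set $\Psi\subset_{fin}N$. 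Applying Definition~\ref{def_separ} to $Y$, $\Psi$ and the given $n$ (after enlarging $n$ slightly, as in the metric proof, to absorb the bounded number of cocycle terms) yields a homomorphism $\phi:G\to H\in\cK$ such that $\phi(Y)$ is $n$-separated from $\phi(\Psi)$. Then $\phi$ descends to a map $\bar\phi$ on $\tilde\Phi$ and, composing with $G/N$, I claim the induced $\Phi\to H$ witnesses $\cK$-approximability: $\phi(\Psi)\ni 1$ and the cocycle defects of $\bar\phi$ lie in $C_1(\phi(\Psi))$, hence the $n$-separation of $\phi(Y)$ from $\phi(\Psi)$ gives exactly the condition of Definition~\ref{def_w-approx}. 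The one point needing care is that the defects $\phi(g)\phi(h)(\phi(gh))^{-1}$ may not literally equal elements of $\phi(\Psi)$ but only conjugates thereof times products of a bounded number of them; this is why one inflates $n$ by the (uniformly bounded) number of triples involved, just as in \cite{Thom1}.

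For the converse, assume $F$ is free and $F/N$ is $\cK$-approximable. Fix $n\in\N$, $Y\subset_{fin}F\setminus N$ and $\Phi\subset_{fin}N$. The idea is to run the $\cK$-approximation of $F/N$ on a finite set $\Sigma\subset F/N$ large enough to contain $Y\bmod N$ and to contain all partial products needed to realize $\Phi$ as cocycle defects. Concretely, for each $w\in\Phi\subset N$ write $w$ as a word in the free generators $a_1,\dots,a_r$ of $F$; let $\Sigma$ be the (finite) image in $F/N$ of the set of all prefixes of all these words together with $Y$. A $\cK$-approximation $\psi:\Sigma\to H\in\cK$ (for a suitably large separation parameter) then extends, using freeness of $F$, to a genuine homomorphism $\phi:F\to H$ sending $a_i\mapsto\psi(a_i\bmod N)$. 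Because the defects of $\psi$ along the prefixes of each $w\in\Phi$ are $n$-separated from $\psi(\Sigma\setminus\{1\})\supseteq\psi(Y\bmod N)$, telescoping these defects shows $\phi(w)\in C_{\ell(w)}(\text{defects})$, so $\phi(\Phi)$ sits inside $C_m(\text{small set})$ while $\phi(Y)$ avoids it; after bookkeeping of the word lengths $\ell(w)$ (all bounded, since $\Phi$ is finite), one gets $\phi(Y)$ $n$-separated from $\phi(\Phi)$, as required by Definition~\ref{def_separ}.

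The genuinely delicate step is the second implication: one must choose the separation parameter fed into the $\cK$-approximation of $F/N$ as a function of $n$ \emph{and} of $\max_{w\in\Phi}\ell(w)$, so that after telescoping $\ell(w)$ many cocycle defects the resulting consequence-level is still at least $n$; and one must verify that $\phi(Y)$, not merely $\phi$ of a lift of $Y\bmod N$, stays outside the relevant $C_m(\cdot)$, which uses that $Y\cap N=\emptyset$ so that $Y\bmod N\subseteq \Sigma\setminus\{1\}$ and freeness makes $\phi$ well-defined on all of $F$ with the prescribed values. The first implication, by contrast, is essentially a transcription of the metric argument with "$\|\cdot\|<\epsilon$" replaced by "$\in C_1$" and "$\|\cdot\|\ge\alpha$" replaced by "$n$-separated", and should go through routinely; I would present it first and then treat the free-group direction. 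One may also, alternatively, deduce the whole proposition from Proposition~\ref{prop_esen} together with both directions of Lemma~\ref{lm_w_and_m} and its explicit length-function construction — but the direct argument above is cleaner and avoids having to track how the ad hoc length functions of Lemma~\ref{lm_w_and_m} interact with the separatedness hypothesis.
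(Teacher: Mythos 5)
The paper actually states this proposition without proof, presenting it as the length-free analogue of Proposition~\ref{prop_esen} (itself only referenced to \cite{Thom1}), so you are supplying an argument the author treats as routine; your overall strategy --- lift and read off defects for the first implication, extend from generators and telescope for the second --- is exactly the intended one. Two remarks, one cosmetic and one a genuine (though easily repaired) gap.

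In the first implication your worry that the defects $\bar\phi(g)\bar\phi(h)\bar\phi(gh)^{-1}$ ``may not literally equal elements of $\phi(\Psi)$'' is unfounded: since $\phi$ is a homomorphism of $G$ and $\bar\phi(g)=\phi(\tilde g)$, the defect equals $\phi\bigl(\tilde g\tilde h\,\widetilde{gh}^{-1}\bigr)$, which is literally an element of $\phi(\Psi)$. Hence the defect set of the induced map lies inside $\phi(\Psi)$, $C_n(\mbox{defects})\subseteq C_n(\phi(\Psi))$, and no inflation of $n$ is needed; the implication is a direct transcription.

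In the second implication there is a real bookkeeping gap. You place in $\Sigma$ only the prefixes of words representing elements of $\Phi$, together with $Y\bmod N$. But to say anything about $\phi(y)$ for $y\in Y$ you must compare $\phi(y)$, computed letter by letter from the generators, with $\psi(\bar y)$; the comparison $\phi(y)=\psi(\bar y)\cdot u_y$ with $u_y$ a product of boundedly many conjugates of defects requires the prefixes of a word for $y$ to lie in $\Sigma$ as well (and, to control $\psi(\bar a_i^{-1})$ versus $\psi(\bar a_i)^{-1}$, the inverses of the generators). With the $\Sigma$ as you defined it, the verification you promise --- that $\phi(Y)$, not merely $\psi$ of a lift, avoids the relevant $C_m(\cdot)$ --- cannot be completed: knowing $\psi(\bar y)\notin C_m(D)$ gives no information about $\phi(y)$ unless the two are linked through defects. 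The fix is immediate: enlarge $\Sigma$ to contain prefixes of fixed words for all elements of $Y\cup\Phi$ and the $\bar a_i^{\pm1}$, and take the separation parameter larger than $c\,n\max_{w\in\Phi}\ell(w)+c\max_{y\in Y}\ell(y)$ for a small absolute constant $c$. With that correction your telescoping argument closes as described.
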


Let $G=\prod_{i\in\N} H_i$, be an (unrestricted) direct product. 
For  $i\leq j$ let
$\Pr_i^j:G\to H_i\times H_{i+1}\times\dots\times H_j$ be the natural projection. Let
$\Pr_i=\Pr_i^i$. We equip $G$ with the product (or Tychonoff) topology. 
(In the most applications $H_i$ are finite groups equipped with discrete topology.)
For a class $\cK$ let $\prod(\cK)$ contains $\cK$ and all finite direct products
of groups from $\cK$. 
\begin{proposition}\label{prop_direct}
Suppose that $\cK$ is a class of compact groups.
Let $X$ be a closed subgroup of $G=\prod_{i\in\N}H_i$, $H_i\in\cK$, 
such that $\Pr_i(X)\in\cK$ for any $i\in\N$. Let $N\triangleleft X$. 
Then $X/N$ is $\prod(\cK)$-approximable. (Note, that $N$ need not to be topologically closed in G).  
\end{proposition}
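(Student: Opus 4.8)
The plan is to reduce the statement to Proposition~\ref{prop_weak_esen} by exhibiting, for the free group $F$ mapping onto $X/N$, a suitable $\prod(\cK)$-separated normal subgroup. So first I would pick a free group $F$ (of countable rank will do, since $X/N$ is generated by the images of a countable generating set of $X$ coming from the product structure) together with a surjection $\pi:F\to X/N$, and lift it to a homomorphism $\tilde\pi:F\to X$; set $M=\tilde\pi^{-1}(N)\triangleleft F$, so that $F/M\cong X/N$. By Proposition~\ref{prop_weak_esen} it suffices to show $M$ is $\prod(\cK)$-separated in $F$: given $n\in\N$, a finite set $Y\subset_{fin}F\setminus M$ and a finite set $\Phi\subset_{fin}M$, I must produce a homomorphism $\psi:F\to P\in\prod(\cK)$ with $\psi(Y)$ $n$-separated from $\psi(\Phi)$.

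The key step is to use the finite projections $\Pr_1^j:X\to \Pr_1(X)\times\cdots\times\Pr_j(X)$ — or rather $\Pr_1^j$ restricted to the closed subgroup $X$, whose image is a closed subgroup of a finite product of groups from $\cK$, hence (by compactness and $\cK$-compactness) itself a compact group; composing with $\tilde\pi$ gives homomorphisms $F\to \Pr_1^j(X)$. Here is where I have to be slightly careful: $\Pr_1^j(X)$ need not lie in $\prod(\cK)$, only in a finite product of the $\Pr_i(X)\in\cK$, but $\prod(\cK)$ by definition contains all finite direct products of groups from $\cK$, and a closed subgroup of such a product is a subgroup, and $\cK$-approximability (and the separation condition we need) passes to subgroups — so composing $\psi_j=\Pr_1^j\circ\tilde\pi:F\to \Pr_1(X)\times\cdots\times\Pr_j(X)\in\prod(\cK)$ is legitimate. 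The elements of $\Phi$ lie in $M=\tilde\pi^{-1}(N)$, so $\tilde\pi(\Phi)\subset N$; the elements of $Y$ lie outside $M$, so $\tilde\pi(Y)\subset X\setminus N$. I now need: for each $y\in Y$ there is some finite level $j(y)$ such that $\Pr_1^{j(y)}(\tilde\pi(y))$ avoids the $n$-consequences of $\Pr_1^{j(y)}(\tilde\pi(\Phi))$ inside $\Pr_1^{j(y)}(X)$. Taking $j$ large enough to work simultaneously for all finitely many $y\in Y$ then finishes it, since $\psi_j(Y)$ will be $n$-separated from $\psi_j(\Phi)$.

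The heart of the argument — and the step I expect to be the main obstacle — is precisely this separation at a finite level, i.e. showing that an element $x\in X$ with $x\notin N$ can, at some finite projection level, be kept out of $C_n(\Pr_1^j(\Phi),\Pr_1^j(X))$ where $\Phi\subset N$. This is a compactness/limiting argument: the sets $C_n(\,\cdot\,,\,\cdot\,)$ are images of continuous maps $(P)^{n}\times (\Pr_1^j(X))^{n}\to \Pr_1^j(X)$ from compact spaces, hence closed; one shows that if no finite level separated $x$ from $\Phi$, then taking an inverse limit of witnessing tuples (using compactness of $\prod_{i\in\N}H_i$ and of $X$) would exhibit $x$ itself as an element of $C_n(\Phi,X)\subseteq \ln{\Phi}_X\subseteq N$ — contradicting $x\notin N$, since $C_n(\Phi,X)\subset \ln{N}_X=N$ as $N\triangleleft X$ and $\Phi\subset N$. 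Making the inverse-limit/compactness bookkeeping precise (the conjugating elements $g_i$ live in the full compact group $X$, the $x_i^{\pm1}$ range over the finite set $\Phi$, and one must pass to a convergent subnet level by level and check the limit really lands where claimed) is the only genuinely technical point; everything else is assembling Proposition~\ref{prop_weak_esen} with the observation that $\prod(\cK)$ absorbs finite products and that the separation property descends to subgroups.
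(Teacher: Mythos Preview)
Your proposal is correct and follows essentially the same line as the paper, but with two unnecessary detours. First, you pass through a free group $F$ and invoke the second half of Proposition~\ref{prop_weak_esen}; the paper instead works directly with $X$ and $N$ and uses only the first implication of Proposition~\ref{prop_weak_esen}, which does not require the ambient group to be free. (Your remark that a countable-rank free group suffices is in fact unjustified---$X$ need not be countably generated---though this is harmless since approximability is local.) Second, your compactness step is phrased as an inverse-limit extraction of witnessing conjugating tuples, whereas the paper argues more directly: the set $C_n(\Phi_N,X)$ is a finite union of continuous images of the compact set $X^n$, hence compact; since it is disjoint from the finite set $\Phi_0=\Phi\setminus N$, there is a basic open neighbourhood of $\Phi_0$ (a preimage under some $\Pr_1^k$) missing it, which immediately gives
\[
C_n\bigl(\Pr\nolimits_1^k(\Phi_N),\Pr\nolimits_1^k(X)\bigr)\cap\Pr\nolimits_1^k(\Phi_0)=\Pr\nolimits_1^k\bigl(C_n(\Phi_N,X)\bigr)\cap\Pr\nolimits_1^k(\Phi_0)=\emptyset.
\]
This is the same compactness content as your subnet argument, just packaged more cleanly and handling all $y\in\Phi_0$ at once rather than one at a time.
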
  
\begin{proof}
Let $\Phi\subset_{fin} X$ and $n\in \N$. Let $\Phi_N=\Phi\cap N$ and 
$\Phi_0=\Phi\setminus N$. 
Notice that $C_n(\Phi_N,X)\cap\Phi_0\subset N\cap \Phi_0=\emptyset$. 
But 
$$
C_n(\Phi_N,X)=\bigcup_{\bar f \in \Phi_N^n} \{f_1^{x_1}f_2^{x_2}\dots f_n^{x_n}\;|\;x_i\in X\}
$$ 
is a finite union of continuous  images 
of a compact set $X^n$, hence $C_n(\Phi_N,X)$ is a compact set. So, there exists an open neighborhood $U$ of $\Phi_0$ such that
$C_n(\Phi_N,X)\cap U=\emptyset$. It follows that 
$$
C_n(\Pr\nolimits_1^k(\Phi_N),\Pr\nolimits_1^k(X))\cap\Pr\nolimits_1^k(\Phi_0)=\Pr\nolimits_1^k(C_n(\Phi_N,X))\cap \Pr\nolimits_1^k(\Phi_0)=\emptyset
$$ 
for some $k>0$ (the pre-images of  open sets 
with respect to $\Pr_1^k$ form the base of the Tychonoff topology). So, the
homomorphisms $\Pr_1^k:X\to \Pr_1^k(X)$ satisfy Proposition~\ref{prop_weak_esen} for
the class $\prod(\cK)$. 
\end{proof}
\begin{corollary}\label{cor_prod1}
Let $\cK$ be a class compact groups closed with respect of taking subgroups and finite direct products.  
Let $X$ be a closed subgroup of a direct product of groups from $\cK$.
Then any quotient of $X$ is approximable by $\cK$.  
Particularly, it is true for  $\cK=Nil,\,Sol$, $Fin$, or class of all compact groups.
\end{corollary}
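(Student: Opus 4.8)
The plan is to deduce the statement from Proposition~\ref{prop_direct} together with the two closure hypotheses on $\cK$, re-running the argument of Proposition~\ref{prop_direct} in the (possibly uncountable) index generality. Write the product as $G=\prod_{i\in I}H_i$ with each $H_i\in\cK$, equipped with the Tychonoff topology, so $G$ is compact Hausdorff and the closed subgroup $X\le G$ is compact. Let $N\triangleleft X$. Since $\cK$-approximability of $X/N$ is a purely algebraic property, by the first part of Proposition~\ref{prop_weak_esen} it suffices to show that $N$ is a $\cK$-separated normal subgroup of $X$.

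So I would fix $n\in\N$, a finite $Y\subseteq X\setminus N$, and a finite $\Phi\subseteq N$. Because $N$ is normal, $C_n(\Phi,X)\subseteq N$, hence $C_n(\Phi,X)\cap Y=\emptyset$; and $C_n(\Phi,X)=\bigcup_{\bar f\in\Phi^n}\{f_1^{x_1}\cdots f_n^{x_n}\;|\;x_i\in X\}$ is a finite union of continuous images of the compact set $X^n$, hence compact, hence closed in $G$. For each $y\in Y$ choose a basic open neighbourhood $V_y\ni y$ disjoint from $C_n(\Phi,X)$; such a $V_y$ depends only on a finite coordinate set $J_y\subseteq I$. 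Put $J=\bigcup_{y\in Y}J_y$, still finite, and let $\Pr_J:G\to\prod_{i\in J}H_i$ be the projection. Then $\Pr_J^{-1}(\Pr_J(y))\subseteq V_y$, so $\Pr_J(y)\notin\Pr_J(C_n(\Phi,X))$ for every $y\in Y$; since $\Pr_J$ is a homomorphism, $\Pr_J(C_n(\Phi,X))=C_n(\Pr_J(\Phi),\Pr_J(X))$, and therefore $\Pr_J(Y)$ is $n$-separated from $\Pr_J(\Phi)$ inside $\Pr_J(X)$.

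It remains to check that $\Pr_J(X)$ is an admissible target, i.e.\ $\Pr_J(X)\in\cK$: it is the continuous image of the compact group $X$, hence a closed subgroup of the finite direct product $\prod_{i\in J}H_i$, which lies in $\cK$ by closure under finite direct products, and then $\Pr_J(X)\in\cK$ by closure under subgroups. Thus $\phi=\Pr_J|_X:X\to\Pr_J(X)\in\cK$ is a homomorphism witnessing Definition~\ref{def_separ} for this $n,Y,\Phi$; so $N$ is $\cK$-separated in $X$ and $X/N$ is $\cK$-approximable. For the final clause, $Nil$, $Sol$, $Fin$ and the class of all compact groups are each closed under (closed) subgroups and finite direct products. (When $I$ is countable the argument shortcuts: $\Pr_i(X)\in\cK$ by subgroup-closure, so Proposition~\ref{prop_direct} already gives $\prod(\cK)$-approximability, and $\prod(\cK).approx=\cK.approx$ by product-closure.)

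The one genuinely new point over Proposition~\ref{prop_direct} — and the step I expect to carry the weight — is that the separation we obtain lives inside $\Pr_J(X)$ rather than inside the ambient finite product $\prod_{i\in J}H_i$, which forces the choice $H=\Pr_J(X)$; this is exactly where closure of $\cK$ under subgroups (a hypothesis absent from Proposition~\ref{prop_direct}) is used, and it is what upgrades ``$\prod(\cK)$-approximable'' to ``$\cK$-approximable''. The extra care needed for an uncountable index set — that basic Tychonoff opens depend on finitely many coordinates and that the compactness/separation argument is otherwise untouched — is routine.
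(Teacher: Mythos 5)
Your proof is correct and follows essentially the paper's route: the paper obtains the corollary directly from Proposition~\ref{prop_direct} (closure under subgroups gives $\Pr_i(X)\in\cK$, and closure under finite direct products makes $\prod(\cK)$-approximability the same as $\cK$-approximability), while you simply re-run the compactness/finite-coordinate-projection argument behind that proposition together with Proposition~\ref{prop_weak_esen}. Your rerun adds two refinements the paper leaves implicit: it covers an arbitrary (possibly uncountable) index set, and it takes the target group to be $\Pr_J(X)$ itself, which pinpoints exactly where subgroup-closure enters, since the separation produced by the compactness argument lives in $\Pr_J(X)$ rather than in the ambient finite product.
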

\begin{corollary}\label{cor_sof}
Let $\cK=Alt$ or $\cK=Sym$.
Then any quotient of a direct product $\prod_{i\in\N}H_i$, $H_i\in\cK$ 
is $\cK$-approximable.  
\end{corollary}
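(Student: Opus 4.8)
The plan is to deduce Corollary~\ref{cor_sof} from Proposition~\ref{prop_direct} together with the equality $\prod(\cK).approx=\cK.approx$ for $\cK=Alt,Sym$, and to establish this equality by direct manipulation of permutation groups. First observe that $Alt$ and $Sym$ consist of finite, hence compact, groups, and that $G=\prod_{i\in\N}H_i$ is a closed subgroup of itself with $\Pr_i(G)=H_i\in\cK$; thus Proposition~\ref{prop_direct} applies verbatim with $X=G$ and gives that $G/N$ is $\prod(\cK)$-approximable for every $N\triangleleft G$. Since $\cK\subseteq\prod(\cK)$ already gives $\cK.approx\subseteq\prod(\cK).approx$, everything reduces to the inclusion $\prod(\cK).approx\subseteq\cK.approx$.

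For that inclusion I would start from a $\prod(\cK)$-approximation $\phi=(\phi_1,\dots,\phi_k)\colon\Phi\to A_{m_1}\times\cdots\times A_{m_k}$ (replace the $A$'s by $S$'s in the symmetric case) and compose it with the homomorphism $\iota$ into $A_{m_1+\cdots+m_k}$ given by the action of the product on the disjoint union $\Omega_1\sqcup\cdots\sqcup\Omega_k$ of the $k$ underlying sets; in the symmetric case one first lands in $S_{m_1+\cdots+m_k}$ and then applies the standard parity-correcting embedding $S_M\hookrightarrow A_{M+2}$ (being an approximate isometry for normalized Hamming length, this also yields $Alt.approx=Sym.approx$ along the way). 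Since $\iota$ is a homomorphism it carries the multiplicative defect of $\phi$ to that of $\iota\circ\phi$, so the only point to check is that $n$-separation of Definition~\ref{def_w-approx} survives. Here the governing fact is the elementary support estimate: in $S_M$ (or $A_M$) every product of $n$ conjugates of elements each moving at most $s$ points moves at most $ns$ points; hence, writing $D$ for the defect of $\phi$, if
$$
\sum_{l=1}^{k}|\mathrm{supp}(\phi_l(v))|\ >\ n\cdot\max_{d\in D}\sum_{l=1}^{k}|\mathrm{supp}(d_l)|\qquad\text{for every }v\in\Phi\setminus\{1\},
$$
then $\iota\circ\phi$ is the required $\cK$-approximation. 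Phrased differently, it suffices to run the $\prod(\cK)$-approximation so that it is metric with respect to the invariant length $\|(\sigma_l)_l\|=\bigl(\sum_l m_l\bigr)^{-1}\sum_l|\mathrm{supp}(\sigma_l)|$ on the product — for which $\iota$ is an isometric embedding into $(A_M,\text{normalized Hamming})$ — and then Lemma~\ref{lm_w_and_m} with the metric ultraproduct description of Proposition~\ref{prop_ultr} finishes the job.

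The main obstacle is exactly producing a $\prod(\cK)$-approximation that is metric for the length above: Definition~\ref{def_w-approx} hands one only \emph{some} invariant length, and for a single alternating group conjugacy-class fusion can genuinely wreck the naive block embedding (already in $A_3\times A_3\hookrightarrow A_6$ a $3$-cycle supported in the first block becomes conjugate to one in the second, so $1$-separation is lost). To get past this I would reopen the proof of Proposition~\ref{prop_direct}: the approximating maps there are honest homomorphisms — the truncations $\Pr_1^k$ — and the compactness step produces, for each $n$, a $k$ with $C_n(\Pr_1^k(\Phi\cap N),P)\cap\Pr_1^k(\Phi\setminus N)=\emptyset$ in $P=H_1\times\cdots\times H_k$. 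Using that normal closures in a finite product of finite simple groups are sub-products, I would try to improve this either by a further projection that kills $\Pr_1^k(\Phi\cap N)$ outright (possible when, among the first $k$ coordinates, $\Phi\cap N$ does not occupy all of them), or, in the remaining case, to a genuinely metric statement bounding $\sum_l|\mathrm{supp}(\pi_l(\Pr_1^k(x)))|$ from above for $x\in\Phi\cap N$ while keeping it bounded below for $x\in\Phi\setminus N$ — a Hamming-weighted refinement of the compactness argument. This refinement is where I expect the real work to lie; it is, in effect, the content recorded separately as $\prod(\cK).approx=\cK.approx$, and once it is in hand the rest is the support bookkeeping indicated above.
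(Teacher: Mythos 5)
Your reduction is the right one and is the same as the paper's: Proposition~\ref{prop_direct} applied to $X=G=\prod_i H_i$ (closed in itself, with $\Pr_i(G)=H_i\in\cK$) gives $\prod(\cK)$-approximability of every quotient, so everything hinges on the inclusion $\prod(\cK).approx\subseteq\cK.approx$. You also correctly diagnose why the block-diagonal embedding $A_{m_1}\times\dots\times A_{m_k}\hookrightarrow A_{m_1+\dots+m_k}$ fails to preserve $n$-separation, and correctly observe that it would suffice to produce a $\prod(Alt)$-approximation that is metric for the summed normalized Hamming length. But that is exactly the step you leave open (``this refinement is where I expect the real work to lie''), and it is the entire content of the corollary, so as written the proposal has a genuine gap. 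Moreover, the direction you propose for closing it --- a Hamming-weighted refinement of the compactness argument in Proposition~\ref{prop_direct} --- is aimed the wrong way: that argument works for arbitrary compact groups, where no passage from combinatorial separation to Hamming-metric separation is available (this is precisely why weak soficity is a priori weaker than soficity). The metric control has to come from the internal structure of alternating groups, not from the topology of the product.

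The missing ingredient is Lemma~\ref{lm_brenner} (Brenner's covering theorem): in an alternating group $A$, if $y\notin C_n(X,A)$ then $\|y\|/\|x\|\geq\frac{n-1}{16}$ for every $1\neq x\in X$, i.e.\ $n$-separation by itself already forces a ratio of normalized Hamming lengths. With this in hand the paper does not embed the product into one large alternating group at all: given $\phi:F\to\tilde A=\prod_{j=1}^kA_j$ with $\phi(y)$ $n$-separated from $\phi(\Phi)$, it shows that the product of the Hamming balls $D_j=\{z\in A_j\;:\;\|z\|<\frac{n-1}{16}\epsilon_j\}$, with $\epsilon_j=\max_{x\in\Phi}\|\Pr_j\phi(x)\|$, is contained in $C_n(\phi(\Phi),\tilde A)$, so some single coordinate $j$ satisfies $\|\Pr_j\phi(y)\|\geq\frac{n-1}{16}\epsilon_j$; one then projects to that coordinate, amplifies (Lemma~\ref{lm_ampl}), and applies Lemma~\ref{lm_sof_esen}. (The $Sym$ case is handled not by a parity-correcting embedding but by viewing a quotient of a product of symmetric groups as an extension of a quotient of a product of alternating groups by an abelian group and quoting closure of soficity under amenable extensions.) If you want to salvage your block-embedding route, Lemma~\ref{lm_brenner} is also exactly the statement that converts the given $n$-separation into the summed-support inequality your bookkeeping requires.
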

\begin{proof}
In Subsection~\ref{subsec_sof} and Subsection~\ref{sub_proof} we show that the classes of $Sym$, $\prod(Sym)$,
$Alt$, $\prod(Alt)$-approximable groups coincide with the class of sofic groups.
\end{proof}

\subsection{Sofic groups}\label{subsec_sof}
Classically, sofic groups are defined using metric approximations. In this subsection
we show that we can avoid the use of the length functions. 
Let $S_m$ denote the group of all permutations of $[m]=\{1,2,...,m\}$ 
(symmetric group on $m$ elements);
$A_m\triangleleft S_m$ denote alternating group on $m$ elements.
The normalized Hamming length function $\|\cdot\|$ is, by definition,
$$
\|h\|=\frac{|\{x\in [m]\;|\;xh\neq x\}|}{m},
$$ 
here we suppose that  $h\in S_m$ has a right natural action, $h:x\to xh$, on $[m]$.
It is defined on $S_m$ as well as on $A_m$ for any $m\in\Z^+$.
Let $\cH$ denote the class of all normalized Hamming length functions.
\begin{definition}\label{def_sof}
$(Sym,\cH)$-approximable groups are said to be sofic groups.
\end{definition}

\begin{lemma}\label{lm_sym_alt}
$(Alt,\cH).approx$ coincides with the class of all sofic groups.
\end{lemma}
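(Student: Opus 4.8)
The plan is to establish the two inclusions of the claimed equality separately. The inclusion $(Alt,\cH).approx \subseteq (Sym,\cH).approx$ is immediate: $A_m$ is a subgroup of $S_m$ and the normalized Hamming length on $A_m$ is precisely the restriction of the one on $S_m$, so any family of maps $\phi:\Phi\to A_m$ witnessing $(Alt,\cH)$-approximability of a group $G$ already witnesses $(Sym,\cH)$-approximability, viewing $A_m\subset S_m$. Hence every $(Alt,\cH)$-approximable group is sofic.

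For the reverse inclusion I would use an explicit length-preserving embedding of symmetric groups into alternating groups. Given $m$, identify $[2m]$ with two disjoint copies $[m]\sqcup[m]'$ and define $\psi:S_m\to S_{2m}$ by letting $\psi(\sigma)$ act as $\sigma$ on each of the two copies simultaneously (``diagonal doubling''). Then $\psi$ is an injective homomorphism; its image lies in $A_{2m}$, since if $\sigma$ is a product of $k$ transpositions then $\psi(\sigma)$ is a product of $2k$ transpositions, hence even. Crucially, $\psi$ preserves the normalized Hamming length exactly: the support of $\psi(\sigma)$ has cardinality $2\,|\{x\in[m]:x\sigma\neq x\}|$, so dividing by $2m$ gives $\|\psi(\sigma)\|=\|\sigma\|$.

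Now suppose $G$ is sofic and fix the map $\alpha:G\to\R$ provided by Definition~\ref{def_m-approx}. Given $\Phi\subset_{fin}G$ and $\epsilon>0$, pick $\phi:\Phi\to S_m$ and the Hamming length witnessing $(Sym,\cH)$-approximation, and consider $\psi\circ\phi:\Phi\to A_{2m}$ together with the Hamming length on $A_{2m}$ (which belongs to $\cH$). Since $\psi$ is a homomorphism preserving $\|\cdot\|$, we obtain $\psi(\phi(1))=1$, $\|\psi(\phi(g))\|=\|\phi(g)\|\geq\alpha_g$ for $g\in\Phi$, and $\|\psi(\phi(gh))(\psi(\phi(g))\psi(\phi(h)))^{-1}\|=\|\phi(gh)(\phi(g)\phi(h))^{-1}\|<\epsilon$ for $g,h,gh\in\Phi$, all with the same $\alpha$. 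Hence $G$ is $(Alt,\cH)$-approximable, completing the proof. I do not expect any real obstacle here; the only point requiring a moment's thought is producing an embedding of $S_m$ into an alternating group that controls the Hamming length, and diagonal doubling does this with exact equality. (A coarser alternative --- sending an odd permutation of $[m]$ to itself composed with a fixed transposition on two extra points, landing in $A_{m+2}$ --- also works, at the cost of an $O(1/m)$ error one would have to absorb into $\epsilon$.)
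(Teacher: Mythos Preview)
Your proof is correct and follows essentially the same route as the paper: both directions use the same ideas, and your ``diagonal doubling'' embedding $S_m\hookrightarrow A_{2m}$ is precisely what the paper calls ``repeating each cycle of a permutation twice'', with the same observation that the normalized Hamming length is preserved exactly. Your write-up is in fact more explicit than the paper's in checking the details (evenness of the image, exact preservation of $\|\cdot\|$, and how the approximation data transfers).
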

\begin{proof}
$A_m<S_m$. So any $(Alt,\cH)$-approximation is a $(Sym,\cH)$-approximation. 
It proves that $(Alt,\cH).approx\subseteq (Sym,\cH).approx$. On the
other hand, $S_m\hookrightarrow A_{2m}$ (just repeating each cycle of a permutation twice).
This inclusion preserve $\|\cdot\|$. So, any $(Sym,\cH)$-approximation may 
be converted into
an $(Alt,\cH)$-approximation. It shows that 
$$
(Sym,\cH).approx\subseteq (Alt,\cH).approx.
$$
\end{proof}
\begin{proposition}\label{sofic_without_metric}
$Sym.approx=\prod(Sym).approx=Alt.approx=\prod(Alt).approx=$\\''sofic groups''.
\end{proposition}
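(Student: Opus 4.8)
The plan is to establish the chain of equalities
\[
Sym.approx \;=\; \prod(Sym).approx \;=\; Alt.approx \;=\; \prod(Alt).approx \;=\; \{\text{sofic groups}\}
\]
by proving a handful of inclusions and invoking the already-established metric results. First, from Lemma~\ref{lm_w_and_m} together with Lemma~\ref{lm_sym_alt} and Definition~\ref{def_sof}, I know that every sofic group (being $(Sym,\cH)$-approximable, equivalently $(Alt,\cH)$-approximable) is $Sym$-approximable and $Alt$-approximable. Since $Alt\subseteq Sym\subseteq\prod(Alt)\subseteq\prod(Sym)$ (using that $A_m<S_m$ and $S_m\hookrightarrow A_{2m}$, so a product of symmetric groups embeds in a product of alternating groups), the trivial monotonicity of $\cK\mapsto\cK.approx$ in $\cK$ gives
\[
Alt.approx \;\subseteq\; Sym.approx \;\subseteq\; \prod(Alt).approx \;\subseteq\; \prod(Sym).approx.
\]
So it remains only to close the loop by showing $\prod(Sym).approx\subseteq$ ``sofic'', i.e. that a $\prod(Sym)$-approximable group is sofic; every intermediate class is then squeezed between ``sofic'' and ``sofic''.

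The key step is therefore: starting from a $\prod(Sym)$-approximation, produce a $(Sym,\cH)$-approximation. Given $\Phi\subset_{fin}G$ and $\epsilon>0$, pick $n$ with $1/n<\epsilon$ and $n\geq 2$, and let $\phi:\Phi\to H$ with $H=S_{m_1}\times\dots\times S_{m_t}$ a finite product of symmetric groups, where $\phi(\Phi\setminus\{1\})$ is $n$-separated from the coboundary set $B=\{\phi(g)\phi(h)\phi(gh)^{-1}: g,h,gh\in\Phi\}$. I need to convert $H$ into a single symmetric group carrying a Hamming length in which the images of $\Phi\setminus\{1\}$ are far from $1$ and the coboundary elements are close to $1$. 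The natural move is to realize $H$ acting on the disjoint union $[m_1]\sqcup\dots\sqcup[m_t]$, i.e. $H\hookrightarrow S_{m}$ with $m=m_1+\dots+m_t$; but this does \emph{not} behave well with Hamming length because an element supported on a single small factor becomes Hamming-small. The fix, exactly as in the passage from $\cK$-approximation to metric approximation in Lemma~\ref{lm_w_and_m}, is to equip $H$ with the invariant length function coming from the conjugacy-generated Cayley graph of $\phi(\Phi\setminus\{1\})\cup\phi(\Phi\setminus\{1\})^{-1}$, rescaled so that $\Phi\setminus\{1\}$ has length $1/n$ and $B\subseteq C_n$ has length at most... wait — we need the coboundaries \emph{small}, not the generators large. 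So instead build the length function on $H$ from $B\cup B^{-1}$: set $\|h\|=\min\{\tfrac1n d_{\Gamma}(1,h),1\}$ where $\Gamma$ is the Cayley graph of $H$ with respect to the conjugacy closure of $B\cup B^{-1}$. Then every $b\in B$ has $\|b\|\le 1/n<\epsilon$, while $n$-separation gives $\|\phi(x)\|=1$ for $x\in\Phi\setminus\{1\}$, since $\phi(x)\notin C_n(B,H)$ means no $\Gamma$-path of length $\le n$ connects it to $1$. This is a $(Sym,\cL)$-approximation — but with $\cL$ an \emph{abstract} invariant length function on a symmetric group, not the Hamming one. To finish I must upgrade $(Sym,\cL)$ to $(Sym,\cH)$; this is where I would cite (or re-prove) the standard fact that any invariant length function on a finite group is dominated, up to a constant, by the Hamming length on a suitable larger symmetric group into which it embeds $\cL$-isometrically — concretely, represent $H$ on itself (Cayley action) scaled appropriately, or use that $(Sym,\text{all invariant lengths}).approx = (Sym,\cH).approx$, which is part of the classical theory behind Proposition~\ref{prop_ultr} and Definition~\ref{def_sof}.

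I expect the main obstacle to be precisely this last upgrade: showing that allowing arbitrary invariant length functions on symmetric groups does not enlarge the approximation class beyond Hamming. The honest way is the regular-representation trick: given $(H,\|\cdot\|)$ with $H$ a product of symmetric groups, let $H$ act on $N$ disjoint copies of $H$, where in the $j$-th copy we let $H$ act by left translation but only after discarding a $\|\cdot\|$-small part — more cleanly, one uses that for finite $H$ the metric ultraproduct with respect to $\|\cdot\|$ embeds into a Hamming metric ultraproduct of symmetric groups (this is the finite, and hence easy, case of the ``every finite group is sofic'' / amplification argument), combined with Proposition~\ref{prop_ultr} to pass back and forth between approximations and ultraproduct embeddings. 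Once that is in hand, the chain of inclusions above closes and all five classes coincide. A cleaner alternative, avoiding length functions entirely, is to argue directly: from the $\prod(Sym)$-approximation $\phi:\Phi\to S_{m_1}\times\dots\times S_{m_t}$, amplify each factor $S_{m_i}$ by taking a large power action $S_{m_i}\curvearrowright [m_i]^{L}$ (diagonal action) to boost the Hamming weight of every nontrivial element arbitrarily close to $1$ while the coboundary elements, being $n$-consequences of $B$, stay controlled; then combine the amplified factors by a disjoint-union-of-blocks-of-equal-size construction so that the single resulting symmetric group has a genuine normalized Hamming length with the required estimates. I would develop whichever of these two routes turns out to need fewer auxiliary lemmas; the disjoint-union-with-amplification route is likely the one the author intends, since it stays inside the $\cK$-approximation framework of Definition~\ref{def_w-approx}, and I would flag the equal-block-size bookkeeping as the only genuinely fussy point.
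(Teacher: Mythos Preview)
Your overall strategy --- establish easy inclusions and then close the loop by proving $\prod(Sym).approx\subseteq\{\text{sofic}\}$ --- is sound, but both of your proposed routes for the closing step have real gaps, and neither matches the paper's argument.

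First, a small point: the class inclusion $Sym\subseteq\prod(Alt)$ you assert is false ($S_3$ is not a direct product of alternating groups), and the embedding $S_m\hookrightarrow A_{2m}$ does \emph{not} give $Sym.approx\subseteq\prod(Alt).approx$ for free, because $n$-separation is computed inside the ambient group: passing to a larger group enlarges $C_n(X,\cdot)$ and can destroy separation. The paper avoids this by proving two separate loops (one for $Alt$, one for $Sym$) rather than a single chain.

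More seriously, the genuine obstacle you flag --- upgrading an arbitrary invariant length on a product of symmetric groups to the Hamming length --- is exactly the content of the proposition, and neither of your sketches resolves it. Your first route (Cayley-graph length, then ``classical theory'') is just a restatement of Lemma~\ref{lm_w_and_m}; the claim that $(Sym,\text{all lengths}).approx=(Sym,\cH).approx$ is not part of any standard package and would itself need Brenner-type input. Your second route (amplify each factor) is confused: amplification sends \emph{every} nontrivial element to Hamming length close to $1$, so it cannot keep the coboundary set $B$ small while making $\phi(\Phi\setminus\{1\})$ large --- $B$ may well contain elements of large Hamming length. The missing idea is Lemma~\ref{lm_brenner}: in an alternating group, $y\notin C_n(X,A)$ forces $\|y\|/\|x\|\ge(n-1)/16$. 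This is what converts $n$-separation into a Hamming-ratio statement, after which Lemma~\ref{lm_ampl} and Lemma~\ref{lm_sof_esen} finish the $Alt$ case. The paper then handles $\prod(Sym)$ by an entirely different, structural argument: a quotient of a product of symmetric groups is an abelian extension of a quotient of a product of alternating groups, hence sofic by the $Alt$ case together with the Elek--Szab\'o theorem on amenable extensions. You should incorporate Lemma~\ref{lm_brenner} (or an equivalent covering theorem for conjugacy classes) --- without it the argument does not close.
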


We will proof the proposition in subsection~\ref{sub_proof}. 
Now we demonstrate some auxiliary results. The following lemma is an analogue
of Lemma~2.5 of \cite{EL} with a similar proof.
 
\begin{lemma}\label{lm_brenner}
Let $A$ be an alternating group, $X\subset A$ and $y\not\in C_n(X,A)$. Then
$\frac{\|y\|}{\|x\|}\geq \frac{n-1}{16}$ for any $1\neq x\in X$. In other words,
denoting $\epsilon=\sup\{\|x\|\;|\;x\in X\}$ one gets 
$\{\alpha\in A\;|\;\|\alpha\|<\frac{n-1}{16}\epsilon\}\subset C_n(X,A)$.
\end{lemma}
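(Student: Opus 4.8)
Since $C_n(\{x\},A)\subseteq C_n(X,A)$ for each $x\in X$ and, on $A=A_m$, $\|z\|=|\mathrm{supp}(z)|/m$ (where $\mathrm{supp}(z)$ is the set of points moved by $z$), the lemma reduces at once to the single‑element assertion: writing $k=|\mathrm{supp}(x)|$, \emph{if $x\ne1$ in $A_m$ and $y\in A_m$ with $|\mathrm{supp}(y)|<\tfrac{n-1}{16}k$, then $y$ is a product of at most $n$ conjugates of $x^{\pm1}$} (to reach exactly $n$ factors one pads with canceling pairs $x^{g}(x^{-1})^{g}=1$ and, if the parity is wrong, replaces one cycle‑factor $\delta$ used in the construction by $\delta^{-1}\!\cdot\!\delta^{2}$, legitimate since such $\delta$ have odd length). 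The ``in other words'' form follows because $\|\cdot\|$ takes values in the finite set $\{0,1/m,\dots,1\}$, so $\epsilon=\sup\{\|x\|:x\in X\}$ is attained; picking $x\in X$ with $\|x\|=\epsilon$ gives $\|y\|<\tfrac{n-1}{16}\|x\|$ and the first form applies.

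For the single‑element claim I would argue as in Lemma~2.5 of \cite{EL}, in two regimes. When $|\mathrm{supp}(y)|$ is below the threshold handled by two factors, one shows $y$ is a product of two conjugates of $x^{\pm1}$ directly: it suffices to find permutations $a,b$ of the cycle type of $x$ with $y=ba^{-1}$, and one builds $a$ so that the (tiny) set $\mathrm{supp}(y)$ meets the cycles of $a$ in a controlled pattern, which forces $ya$ to again have the cycle type of $x$; this is possible precisely because $k$ is large relative to $|\mathrm{supp}(y)|$, and it is here that the constant $16$ is actually spent. For larger $|\mathrm{supp}(y)|$ one amplifies: first manufacture, as a product of at most two conjugates of $x^{\pm1}$, a single cycle $\gamma$ of (odd) length $\ell\ge k/2$ supported inside $\mathrm{supp}(x)$ — if $x$ is already a $k$‑cycle take $\gamma=x$, and in general choose a conjugate $x^{g}$ agreeing with $x^{\pm1}$ off a long cycle so the product collapses to one long cycle (a routine permutation computation needing no extra points); then write $y$, of support $s$, as a product of two cycles of length $\le s+1$ (every even permutation of a set is a product of two cycles of that set, up to one auxiliary point in small exceptional cases), and each such cycle as a product of $\lceil s/(\ell-1)\rceil$ $\ell$‑cycles, i.e.\ conjugates of $\gamma$. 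Unwinding, $y$ becomes a product of $O(|\mathrm{supp}(y)|/k)+O(1)$ conjugates of $x^{\pm1}$, and the hypothesis $|\mathrm{supp}(y)|<\tfrac{n-1}{16}k$ makes this $\le n$ with room to spare — so almost all of the factor $16$ is really needed only for the base case.

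The step I expect to be the main obstacle is making the constant $16$ honest simultaneously across all cycle types of $x$ and in the degenerate regimes. If $\mathrm{supp}(x)$ is nearly all of $[m]$ there is no room to enlarge supports, but the long‑cycle construction above stays inside $\mathrm{supp}(x)$ and the decomposition of $y$ can be carried out inside $\mathrm{supp}(y)$ together with the few remaining points, which suffices because then $|\mathrm{supp}(y)|<\tfrac{n-1}{16}k$ already forces $n$ large; if $m$ is small the asymptotic constructions may degenerate, but again the hypothesis forces $n$ large enough that simplicity of $A_m$ ($m\ge5$) plus a crude covering bound finishes, leaving only finitely many tiny cases to check by hand. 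One also has to ensure the conjugating elements lie in $A_m$ rather than $S_m$: this is the familiar splitting‑of‑conjugacy‑classes phenomenon for alternating groups, and as in \cite{EL} it only restricts which single conjugates are available, not which products are, so the constructions can be arranged to respect it without affecting any bound.
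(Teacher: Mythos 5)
Your overall strategy --- reduce to a single $x\in X$, then show that every even permutation of small support relative to $|\mathrm{supp}(x)|$ is a short product of conjugates of $x^{\pm1}$ --- is the right one, and it is essentially how the paper proceeds (the paper itself presents the lemma as an analogue of Lemma~2.5 of \cite{EL}). The difference is where the hard combinatorics lives. The paper takes as a black box Brenner's covering theorems (Lemmas~2.05, 2.06, 3.03 of \cite{Brenner}): for $x\neq 1$ the fourth power $C_x^4$ of its conjugacy class contains \emph{every} nontrivial even permutation supported on $\mathrm{supp}(x)$. Granting that, the argument is short: tile $\mathrm{supp}(y)$ by $r=\lceil\|y\|/\|x\|\rceil$ translates of $\mathrm{supp}(x)$, obtain inside $C_x^{4r}$ an element $z$ with $\mathrm{supp}(z)=\mathrm{supp}(y)$, and apply Brenner once more, now to $z$, to get $y\in C_z^4\subseteq C_x^{16r}$ --- that is exactly where the $16$ comes from. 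You instead try to reconstruct the Brenner-type facts by hand, and that is where your proposal has a genuine gap: the two claims your argument rests on --- (i) if $|\mathrm{supp}(y)|$ is small relative to $k=|\mathrm{supp}(x)|$ then $y$ is a product of two conjugates of $x^{\pm1}$, and (ii) some product of at most two conjugates of $x^{\pm1}$ is a single odd cycle of length at least $k/2$ supported in $\mathrm{supp}(x)$ --- are stated with only a gesture at a construction (``one builds $a$ so that \dots which forces $ya$ to again have the cycle type of $x$'', ``a routine permutation computation needing no extra points''). These are precisely the nontrivial covering statements that occupy Brenner's paper; they require a genuine case analysis over cycle types (involutions of type $2^a$, elements with many short cycles, elements whose support is almost all of $[m]$, the splitting of $S_m$-classes in $A_m$), and as literally stated they admit exceptions in the degenerate cases you defer (e.g.\ for $x$ of type $2^2$ in $A_4$ no power of $C_x$ ever leaves the Klein four-group, so no $3$-cycle of $\mathrm{supp}(x)$ is reached). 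Asserting (i) and (ii) is asserting the lemma.

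A smaller but real defect: the parity-padding device does not work as written. The $\ell$-cycles $\delta=\gamma^{g}$ in your decomposition are not themselves conjugates of $x^{\pm1}$ but products of two such, so replacing $\delta$ by $\delta^{-1}\cdot\delta^{2}$ changes the number of $x^{\pm1}$-conjugate factors by an even amount and cannot correct the parity of the factor count; to land in $C_n$ with $n$ of the ``wrong'' parity you need to know, for instance, that $x$ itself is a product of two conjugates of $x^{\pm1}$, which is again a Brenner-type fact rather than something automatic. I would recommend either citing \cite{Brenner} and running the clean two-stage covering argument ($4r$, then $16r$) as the paper does, or, if you want a self-contained proof, writing out in full the case analysis behind claims (i) and (ii) --- that analysis, not the bookkeeping around it, is the content of the lemma.
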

\begin{proof}
This lemma is a manifestation of the fact that in a finite simple group powers of
a conjugacy class cover the group ``almost as fast as possible'' \cite{Shalev}.
The case of alternating groups was considered in \cite{Brenner}.
Let $C_x$ denote the conjugacy class of $x\neq 1$ in $A$.
Lemmas~2.05, 2.06, 3.03 of \cite{Brenner} imply that $C_x^4$ contains  all nontrivial
even permutations of support of $x$. (Support of $x$ are elements which are not
fixed by $x$.)  Then we may shift support by conjugation and construct 
$r=\left\lceil \frac{\|y\|}{\|x\|}\right\rceil$ 
permutations whith supports partitioning the support of $y$. So, $C_x^{4r}$ contains 
a permutation with the same support
as $y$. Applying ones again Lemmas~2.05, 2.06, 3.03 of \cite{Brenner}  we obtain that 
$y\in C_x^{16r}$.
\end{proof}
Remark. It looks like that one may change $\frac{n-1}{16}$ by $\frac{n-1}{4}$ 
in the lemma. For this we should start with $r$ shifts of $x$ and cover the support of 
$y$, but we need 
some extra details if the support of $y$ is almost all set $[m]$... 
\begin{lemma}\label{lm_ampl}
Let $G$ be a group, $X\subset G$, and $y\in G$. Suppose that there exists a
homomorphism $\phi:G\to A_m$, such that $\|\phi(y)\|/\|\phi(x)\|>n\geq 2$ for
any $x\in X$. Then there exists $r=r(y)$ and a homomorphism $\psi:G\to A_{mr}$, such that 
$\|\psi(y)\|\geq 1/2$ and $\|\psi(x)\|<1-(1/4)^{1/n}$. 
\end{lemma}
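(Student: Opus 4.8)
The plan is to amplify the separation ratio between $\|\phi(y)\|$ and $\|\phi(x)\|$ into absolute bounds by composing $\phi$ with a diagonal-type embedding of $A_m$ into a larger alternating group $A_{mr}$. First I would set $p=\|\phi(y)\|$, so that $\|\phi(x)\|<p/n$ for all $x\in X$. The idea is to pick $r$ and use the natural action of $A_m$ on $r$ disjoint copies of $[m]$, i.e. the embedding $\iota_r\colon A_m\hookrightarrow A_{mr}$ sending a permutation $\sigma$ to the permutation acting as $\sigma$ on each of the $r$ blocks simultaneously. This embedding is isometric for the normalized Hamming length: $\|\iota_r(\sigma)\|=\|\sigma\|$. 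That alone does not change anything, so the actual amplification must come from a twisted version of this construction where the $r$ blocks are acted on by distinct conjugates (or by a permuted set of points), turning one copy of a small-support permutation into something whose complement of fixed points shrinks multiplicatively. Concretely, I would choose conjugating elements $c_1,\dots,c_r\in S_{mr}$ so that $\psi(g)=\prod_{j=1}^r c_j^{-1}\iota_1(\phi(g))c_j$ has the property that the fixed-point sets of the $r$ pieces are ``independent'': then $1-\|\psi(g)\|=\prod_{j}(1-\|\phi(g)\|)=(1-\|\phi(g)\|)^r$, i.e. $\|\psi(g)\| = 1-(1-\|\phi(g)\|)^r$. (One realizes this inside a single $A_{mr}$ by permuting which of the $r$ copies of each point of $[m]$ a given block-permutation moves; the parity is harmless since we are already in alternating groups, or at worst we pass to $A_{2mr}$ as in Lemma~\ref{lm_sym_alt}.)

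With the formula $\|\psi(g)\|=1-(1-\|\phi(g)\|)^r$ in hand, the lemma reduces to choosing $r=r(y)$ so that both inequalities hold. For the upper bound on $\|\psi(x)\|$: since $\|\phi(x)\|<p/n$ we get
$$
\|\psi(x)\| = 1-(1-\|\phi(x)\|)^r < 1-(1-p/n)^r,
$$
so it suffices to ensure $(1-p/n)^r\geq (1/4)^{1/n}$, equivalently $r\log(1/(1-p/n))\leq \tfrac1n\log 4$, which holds for all $r$ up to roughly $\tfrac{\log 4}{n}\cdot\tfrac{1}{p/n}=\tfrac{\log4}{p}$ (using $\log(1/(1-t))\ge t$). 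For the lower bound $\|\psi(y)\|\geq 1/2$ we need $(1-p)^r\leq 1/2$, i.e. $r\geq \tfrac{\log 2}{\log(1/(1-p))}$, and since $\log(1/(1-p))\ge p$ it is enough to take $r\ge (\log 2)/p$ rounded up — but we must check this value of $r$ is still below the upper threshold $\approx(\log4)/p=2(\log2)/p$. This comparison is exactly where $n\ge 2$ enters: the admissible window for $r$ has width proportional to $(\log 4)/p$ while the requirement from $y$ costs $(\log 2)/p$, and the factor $1/n\le 1/2$ in the exponent of $(1/4)^{1/n}$ is precisely what keeps the two compatible. So I would set $r=r(y)=\lceil (\log 2)/\|\phi(y)\|\rceil$ and verify both inequalities hold with this choice; note $r$ depends only on $y$ (through $\phi$), consistent with the statement.

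The main obstacle I anticipate is not the arithmetic but making the ``independent fixed-point sets'' construction rigorous as an honest homomorphism into a single alternating group: one has to define the twisting so that $g\mapsto\psi(g)$ is genuinely multiplicative, not merely defined pointwise. The clean way is to build $\psi$ as the composition of $\phi$ with a fixed embedding $A_m\hookrightarrow A_{mr}$ obtained from a transitive-on-blocks action: realize $[mr]$ as $[m]\times[r]$, let $A_m$ act on the first coordinate by $\phi$ and trivially on the second — which gives the isometric, useless embedding — and then precompose with an automorphism of the ambient group that ``spreads'' supports. Since inner automorphisms of $S_{mr}$ are realized by conjugation by a single fixed permutation independent of $g$, $\psi(g)=\tau^{-1}\iota_r(\phi(g))\tau$ is automatically a homomorphism for any fixed $\tau$; the only real work is choosing $\tau$ so that the supports of the $r$ blocks of $\tau^{-1}\iota_r(\sigma)\tau$ are pairwise disjoint for every $\sigma$ in the image (this is possible because each block uses only $m$ of the $mr$ points, and there is room to route them disjointly), yielding the product formula for $1-\|\psi(g)\|$. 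Everything else is the elementary inequality manipulation sketched above, with the parity fix via Lemma~\ref{lm_sym_alt} if needed.
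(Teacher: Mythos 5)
Your overall strategy (amplify the ratio hypothesis into absolute bounds via a power construction, then do elementary estimates) is the right one, but the construction of $\psi$ at the heart of your argument fails, and this is precisely the step the lemma lives on. Any map of the form $\psi(g)=\tau^{-1}\iota_r(\phi(g))\tau$, with $\iota_r$ the block-diagonal embedding into the alternating group on $[m]\times[r]$ and $\tau$ a fixed permutation, preserves the normalized Hamming length: conjugation merely relabels the $mr$ points, so $\|\psi(g)\|=\|\phi(g)\|$ no matter how you choose $\tau$. The ``disjoint supports'' you ask $\tau$ to arrange give additivity of supports (hence the same normalized length), not the product formula for fixed points. The obstruction is in fact absolute: a product of $r$ conjugates of a permutation with support of size $s$ has support of size at most $rs$, so any $\psi(g)$ assembled from $r$ conjugated copies of $\phi(g)$ inside the alternating group on $m\cdot r$ points satisfies $\|\psi(g)\|\le rs/(mr)=\|\phi(g)\|$, whereas your desired identity $\|\psi(g)\|=1-(1-\|\phi(g)\|)^r$ would force $\|\psi(g)\|>\|\phi(g)\|$ for $r\ge 2$. (Moreover, for a pointwise product of conjugates to be a homomorphism at all, the $r$ conjugated copies of $A_m$ must commute, i.e.\ have disjoint supports, which locks you into exactly the length-preserving case you were trying to escape.)

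The construction that actually yields $1-\|\psi(g)\|=(1-\|\phi(g)\|)^r$ is the amplification trick of Elek--Szab\'o, which is what the paper uses: let $A_m$ act diagonally on the Cartesian power $[m]^r$ --- a set of $m^r$ points, not $m\cdot r$; the paper's symbol $A_{mr}$ is explicitly ``the even permutations of $[m]^r$'' --- via $h\mapsto h^{\otimes r}$, $(j_1,\dots,j_r)h^{\otimes r}=(j_1h,\dots,j_rh)$. A tuple is fixed exactly when all its coordinates are fixed, so the fixed-point proportions multiply; this independence structure cannot be simulated on only $mr$ points. Granting the formula, the paper then picks $r$ with $1/4<(1-\|\phi(y)\|)^r\le 1/2$ (possible because consecutive powers differ by the factor $1-\|\phi(y)\|>1/2$ when $\|\phi(y)\|<1/2$; take $r=1$ otherwise) and uses monotonicity of $t\mapsto \log(1-t)/t$ to turn $\|\phi(y)\|/\|\phi(x)\|>n$ into $\log(1-\|\psi(y)\|)/\log(1-\|\psi(x)\|)>n$, hence $\|\psi(x)\|<1-(1/4)^{1/n}$. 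This is cleaner than your logarithmic bookkeeping, which also has secondary slips: the claim that the $x$-constraint ``holds for all $r$ up to roughly $(\log 4)/p$'' uses $\log(1/(1-t))\ge t$ in the wrong direction, your choice $r=\lceil(\log 2)/p\rceil$ needs a separate verification when $\|\phi(y)\|$ is close to $1/2$, and the trivial case $\|\phi(y)\|\ge 1/2$ (where $r=1$ suffices) should be split off, as the paper does.
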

\begin{proof}
If $\|\phi(y)\|\geq 1/2$, put $r=1$ and $\psi=\phi$. We are done.  
If $\|\phi(y)\|<1/2$ the arguments are based on the amplification trick, see \cite{EL}. 
To make exposition self-contained we explain it here. Define an
inclusion $h\to h^{\otimes r}:A_m\to A_{mr}$ as follows.
Consider $A_{mr}$ as even permutations of $[m]^r$. For $(j_1,j_2,\dots,j_r)\in [m]^r$ let
$(j_1,j_2,\dots,j_r)h^{\otimes r}=(j_1h,j_2h,\dots,j_rh)$. It is easy to check that 
$1-\|h^{\otimes r}\|=(1-\|h\|)^r$.
 
Now, let $\psi(z)=\phi^{\otimes r}(z)$, then
$1-\|\psi(z)\|=(1-\|\phi(z)\|)^r$. We may choose $r\in \N$ such that 
$1/4<1-\|\psi(y)\|\leq 1/2$. We just need to estimate $\psi(x)$ for $x\in X$:
$$
\frac{\log(1-\|\psi(y)\|)}{\log(1-\|\psi(x)\|)}=
\frac{\log(1-\|\phi(y)\|)}{\log(1-\|\phi(x)\|)}\geq \frac{\|\phi(y)\|}{\|\phi(x)\|}>n.
$$   
So, the estimate for $\|\psi(x)\|$ follows.
\end{proof}
The following lemma is a strengthening of Proposition~\ref{prop_esen} 
for sofic groups.
\begin{lemma}\label{lm_sof_esen}
Let $G$ be a group and $N\triangleleft G$. Suppose, that for any $y\in G\setminus N$,
any $\Phi\subset_{fin}N$, and any $\epsilon>0$, there exists a homomorphism
$\phi:G\to A_m$ such that
$\|\phi(y)\|\geq 1/2$ and $\|\phi(x)\|<\epsilon$ for any $x\in \Phi$. 
Then $G/N$ is sofic.  
\end{lemma}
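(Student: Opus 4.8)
The plan is to reduce Lemma~\ref{lm_sof_esen} to the metric separation criterion of the classical theory, that is, to Proposition~\ref{prop_esen} together with Definition~\ref{def_metric_separ} applied to the class $(Sym,\cH)$ (equivalently $(Alt,\cH)$ by Lemma~\ref{lm_sym_alt}). The only gap between the hypothesis and that criterion is that Definition~\ref{def_metric_separ} asks us to control a finite set $Y\subset_{fin}G\setminus N$ of elements \emph{simultaneously} by a single homomorphism, with a fixed lower bound $\alpha_y>0$ for each $y$, whereas the hypothesis here only produces, for each single $y$ and each $\Phi$ and $\epsilon$, a homomorphism $\phi:G\to A_m$ with $\|\phi(y)\|\ge 1/2$ and $\|\phi(x)\|<\epsilon$ on $\Phi$. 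So the heart of the argument is an amplification-and-product step that upgrades ``one $y$ at a time'' to ``all $y\in Y$ at once.''

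Concretely, I would first fix the data $Y=\{y_1,\dots,y_s\}\subset_{fin}G\setminus N$, $\Phi\subset_{fin}N$, and $\epsilon>0$, and set $\alpha_y=1/4$ for all $y\in G\setminus N$ (any fixed positive constant works). For each $j$, apply the hypothesis with the single element $y_j$, the set $\Phi$, and a small parameter $\epsilon_j$ to be chosen, obtaining $\phi_j:G\to A_{m_j}$ with $\|\phi_j(y_j)\|\ge 1/2$ and $\|\phi_j(x)\|<\epsilon_j$ for $x\in\Phi$. Now form the diagonal homomorphism $\Psi=(\phi_1,\dots,\phi_s):G\to A_{m_1}\times\cdots\times A_{m_s}$, and embed this product into a single alternating group: using $S_{a}\times S_{b}\hookrightarrow S_{a+b}$ (disjoint supports) followed by the support-doubling trick $S_m\hookrightarrow A_{2m}$ from the proof of Lemma~\ref{lm_sym_alt}, one gets a length-rescaling embedding of $\prod_j A_{m_j}$ into some $A_M$; under this embedding the Hamming length of a tuple $(h_1,\dots,h_s)$ becomes $\sum_j m_j'\|h_j\|/M$ for suitable weights $m_j'$. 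Composing, $\Psi$ becomes a homomorphism $G\to A_M$ under which $\|\Psi(y_j)\|\ge (m_j'/M)\cdot(1/2)$ — a fixed positive number depending on the $m_j$ but not on $\Phi$ or $\epsilon$ — while $\|\Psi(x)\|\le \sum_j (m_j'/M)\epsilon_j< \epsilon$ for $x\in\Phi$ provided each $\epsilon_j$ is chosen small enough. The one point needing care is that the positive lower bounds $(m_j'/M)/2$ for the $y_j$ depend on the $M$ produced, whereas Definition~\ref{def_metric_separ} wants $\alpha$ fixed \emph{before} we choose the homomorphism; this is exactly what the amplification Lemma~\ref{lm_ampl} is for — one first amplifies each $\phi_j$ so that $\|\phi_j(y_j)\|$ is pushed back up to $\ge 1/2$ while keeping $\|\phi_j(x)\|$ small, after the product-embedding has diluted it, i.e.\ iterate amplify/embed until the final length of each $y_j$ is at least $1/4=\alpha_{y_j}$. (Since $s$ and the $y_j$ are fixed, finitely many amplifications suffice.)

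Having produced, for every $Y$, $\Phi$, $\epsilon$, a homomorphism $\phi:G\to A_M$ with $\|\phi(y)\|\ge 1/4$ for $y\in Y$ and $\|\phi(x)\|<\epsilon$ for $x\in\Phi$, we have verified that $N$ is an $(Alt,\cH)$-separated normal subgroup of $G$ in the sense of Definition~\ref{def_metric_separ} (with $\cK=Alt$, $\cL=\cH$, $\alpha\equiv 1/4$). By the first half of Proposition~\ref{prop_esen}, $G/N$ is then $(Alt,\cH)$-approximable, and by Lemma~\ref{lm_sym_alt} this is the same as being sofic. That finishes the proof.

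I expect the main obstacle to be bookkeeping the length rescaling through the chain of embeddings $\prod_j A_{m_j}\hookrightarrow S_{\sum m_j}\hookrightarrow A_{2\sum m_j}$ and making sure the ``small on $\Phi$'' side and the ``bounded below on $Y$'' side are controlled simultaneously — in particular, interleaving the amplification steps (Lemma~\ref{lm_ampl}) with the product-embedding steps in the right order so that the final lower bound on each $\|\phi(y_j)\|$ is an \emph{absolute} constant independent of $\epsilon$. Everything else (the embeddings preserve/rescale $\|\cdot\|$ in a transparent way, $C_n$-separation versus length separation, invariance of soficity under the $Alt$ versus $Sym$ choice) is already packaged in the lemmas above.
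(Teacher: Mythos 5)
Your proposal is correct and follows essentially the same route as the paper: combine the single-$y$ homomorphisms by a disjoint-support direct sum into one alternating group, then amplify to restore an absolute lower bound on $\|\phi(y)\|$ for $y\in Y$ while keeping $\|\phi(x)\|<\epsilon$ on $\Phi$, and conclude via Proposition~\ref{prop_esen}. The one bookkeeping point you flag is handled in the paper by first equalizing the block sizes (replacing each $\phi_y$ by $\mathrm{lcm}(\{m_y\})/m_y$ copies of itself before summing), so that the diluted lower bound is exactly $\tfrac{1}{2|Y|}$ and the amplification exponent --- hence the required smallness of the preliminary $\epsilon'$ --- depends only on $|Y|$ and $\epsilon$, not on the $m_y$ produced by the hypothesis.
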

\begin{proof}
\begin{claim}\label{cl_1}
Let $G$ be a group, $X\subset G$, $Y\subset_{fin} G$, and $\epsilon>0$. 
Suppose, that for any $y\in Y$ there exists
a homomorphism $\phi_y:G\to A_{m_y}$ such that $\|\phi_y(y)\|\geq 1/2$ and
$\|\phi_y(x)\|< \epsilon$ for any $x\in X$. Then there exists 
$\psi:G\to A_m$ such that $\|\psi(y)\|\geq\frac{1}{2|Y|}$ for any $y\in Y$ and 
$\|\psi(x)\|<\epsilon$ for any $x\in X$.
\end{claim}
\begin{proof}
For $\alpha\in A_r$ and $\beta\in A_k$ we define $\alpha\oplus\beta\in A_{r+k}$. Consider
$A_{r+k}$ as permutation groups on disjoint union $[r]\dot\cup [k]$. Then
$$
x(\alpha\oplus\beta)=\left\{\begin{array}{lll} x\alpha &\mbox{if} & x\in [r]\\
                                               x\beta &\mbox{if} & x\in [k]
                     \end{array}\right.   
$$
It is clear that $\|\alpha\oplus\beta\|=\frac{r\|\alpha\|+k\|\beta\|}{r+k}$.
Let $r=\gcm(\{m_y\;|\;y\in Y\})$. Replacing each $\phi_y$ by the sum 
$\phi_y\oplus\phi_y\dots$ ($r/m_y$ copies of $\phi_y$)
we may assume that $\phi_y:G\to A_r$ for every $y\in Y$. Now, take 
$\psi(z)=\bigoplus\limits_{y\in Y}\phi_y(z)$.
\end{proof}
In order to prove the lemma we use Proposition~\ref{prop_esen}.
Fix $Y\subset_{fin}G\setminus N$, $\Phi\subset_{fin}N$, $\epsilon>0$ we show that 
the hypothesis of the Lemma 
guarantee the assumptions of  Proposition~\ref{prop_esen} for $\alpha_y=1/3$. 
Choose sufficiently small $0<\epsilon'$. 
For every $y\in Y$ consider $\phi_y:G\to A$ such that $\|\phi_y(y)\|\geq 1/2$ 
and $\|\phi_y(x)\|<\epsilon'$ for $x\in\Phi$. Then construct $\psi$ of 
Claim~\ref{cl_1}. Applying the amplification trick to $\psi$ we obtain
$\phi$ of Proposition~\ref{prop_esen}. Notice, that the choice of
$\epsilon'$ depends on $\epsilon$ and $|Y|$ only. 
\end{proof}

\subsection{Proof of Proposition~\ref{sofic_without_metric}}\label{sub_proof}
Let us start with $Alt$ parts of the proposition.
Notice, that 'sofic groups''\\ $=(Alt,\cH).approx\subseteq Alt.approx \subseteq\prod(Alt).approx$  by Lemma~\ref{lm_w_and_m} and Lemma~\ref{lm_sym_alt}. 
So, it suffices to show that 
$\prod(Alt)$-approximable groups are 
$(Alt,\cH)$-approximable. Let $G=F/N$ be $\prod(Alt)$-approximable, $F$ be a free group.
Let $y\in F\setminus N$ and $\Phi\subset_{fin}N$. Fix $\epsilon>0$ and $n\in\N$ such that
$\epsilon<1-(1/4)^\frac{16}{n-1}$.
By Proposition~\ref{prop_weak_esen} chose a homomorphism 
$\phi:F\to \tilde A=\prod\limits_{j=1}^kA_j$ such that $\{\phi(y)\}$ is $n$-separated from 
$\phi(\Phi)$.  Let $\epsilon_j=\max\{\|\Pr_j(\phi(x))\|\;|\;x\in\Phi\}$ and 
$D_j=\{y\in A_j\;|\;\|y\|<\frac{n-1}{16}\epsilon_j\}$. 
As $Pr_j(C_n(X,\tilde A))=C_n(Pr_j(X),A_j)$, 
by Lemma~\ref{lm_brenner} we get
$\prod\limits_{j=1}^kD_j\subset C_n(\phi(\Phi),\tilde A)$. So, $Pr_j\phi(y)\not\in D_j$ for some $j$.
The $Alt$-part of the proposition follows by Lemmas~\ref{lm_ampl} and \ref{lm_sof_esen}.

Let $G$ be a $\prod(Sym)$-approximable group. By Proposition~\ref{prop_ultr} and 
Lemma~\ref{lm_w_and_m} $G$ is a subgroup of $P$, a quotient of a direct product of
symmetric groups.  Symmetric groups are  extensions of alternating groups by the cyclic
group with 2 elements.
So, $P$ is an extension of $Q$, a quotient of a direct product
of alternating groups, by an abelian group. 
As the proposition and Corollary~\ref{cor_sof} is proven for 
$Alt$, the group $Q$ is sofic. Now, $P$ is sofic as an extension of $Q$ by amenable 
group, see \cite{EL}. 
\section{$\Gamma$-groups.}\label{sec3}
\begin{definition}\label{def_Fgroup}
Let $\Gamma$ be a group. A group $G$ with injective homomorphism $\Gamma\hookrightarrow G$ 
is said to be an $\Gamma$-group. 
One may think that $G$ has a fixed copy of $\Gamma <G$. Let $G_1$, $G_2$  
be $\Gamma$-groups. A homomorphism $\phi:G_1\to G_2$
is called a $\Gamma$-morphism if the restriction of $\phi$ on $\Gamma$ 
is the identity map, $\phi|_\Gamma=Id_\Gamma$. 
Notation $\phi:G_1\to_\Gamma G_2$ means $\phi$ is a 
$\Gamma$-morphism. 
\end{definition}
This definition has an important use in algebraic geometry over groups, \cite{Baumslag1}.
Clearly,  $\Gamma$-groups form a category.
For an $\Gamma$-group $G$ and $N\triangleleft \Gamma$ we denote 
$\bar{N}^G=\Gamma\cap \ln{N}_G$. 
It is clear that $\bar{\cdot}^G$ 
is an idempotent.
The condition $N=\bar N^G$ is equivalent 
to the existence of the following commutative diagram with injective $\psi$:
\begin{equation}\label{diagram1}
\xymatrix{\Gamma \ar@{^{(}->}^{\phi}[r]\ar@{->}[d] & G\ar@{->}[d]\\
         \Gamma /N\ar@{^{(}->}^{\psi}[r] & H
         }  
\end{equation} 
In the category of $\Gamma$-groups  not for all pairs of objects $G_1$, $G_2$ there 
exists a morphism $G_1\to_\Gamma G_2$. (For example, $G_1$ is a simple group, $\Gamma$ is 
a proper subgroup of $G_1$ and $G_2=\Gamma$.) 
\begin{lemma} \label{lm_prec}
Suppose that  there exists $\Gamma$-morphism $H\to_\Gamma G$. 
Then $\bar{N}^H\subseteq \bar{N}^G$ for any $N\triangleleft \Gamma$.
\end{lemma}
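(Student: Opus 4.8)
The plan is to unwind the definition of $\bar{N}^{(\cdot)}$ and push everything through the given $\Gamma$-morphism. Let $f\colon H\to_\Gamma G$ be the assumed $\Gamma$-morphism, so that $f$ restricts to the identity on the fixed copy of $\Gamma$ sitting inside both $H$ and $G$. Fix $N\triangleleft\Gamma$ and take an arbitrary element $\gamma\in\bar{N}^H=\Gamma\cap\ln{N}_H$. I want to show $\gamma\in\Gamma\cap\ln{N}_G=\bar{N}^G$. Since $\gamma\in\Gamma$ already, the only thing to verify is $\gamma\in\ln{N}_G$.

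First I would use the concrete description of the normal closure: $\ln{N}_H$ consists of all finite products $\prod_{i} n_i^{h_i}$ with $n_i\in N$ and $h_i\in H$ (using the paper's notation $x^h=h^{-1}xh$). So write $\gamma=\prod_{i=1}^k n_i^{h_i}$ with $n_i\in N\subseteq\Gamma$ and $h_i\in H$. Now apply the homomorphism $f$. Because $f$ is a homomorphism, $f(\gamma)=\prod_{i=1}^k f(n_i)^{f(h_i)}$; because $f$ is a $\Gamma$-morphism and each $n_i\in\Gamma$, we have $f(n_i)=n_i\in N$; and $f(h_i)\in G$. Hence $f(\gamma)=\prod_{i=1}^k n_i^{f(h_i)}\in\ln{N}_G$. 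Finally, $\gamma\in\Gamma$ and $f$ is the identity on $\Gamma$, so $f(\gamma)=\gamma$. Therefore $\gamma\in\ln{N}_G$, i.e. $\gamma\in\bar{N}^G$, which gives the desired inclusion $\bar{N}^H\subseteq\bar{N}^G$.

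There is essentially no obstacle here: the statement is a soft, diagram-chasing fact, and the only point requiring any care is making sure that the $\Gamma$-morphism genuinely fixes the elements of $N$ pointwise (so that the generators of $\ln{N}_H$ map to generators of $\ln{N}_G$ rather than merely into $\ln{N}_G$), which is exactly the content of $\phi|_\Gamma=\mathrm{Id}_\Gamma$ from Definition~\ref{def_Fgroup}. One could alternatively phrase the argument via the commutative square \eqref{diagram1}, but the direct computation with normal-closure representatives is shortest. If one wanted a completely generator-free argument, one could instead note that $f(\ln{N}_H)\subseteq\ln{f(N)}_G=\ln{N}_G$ since the image of a normal subgroup's normal closure under a homomorphism lands in the normal closure of the image, and then intersect with $\Gamma$ using $f|_\Gamma=\mathrm{Id}$; I expect the author's proof to be one of these two lines.
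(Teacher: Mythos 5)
Your proof is correct and is essentially the paper's argument: the author writes the same computation as a one-line chain of inclusions, $\ln{N}_G\cap\Gamma\supseteq\ln{\phi(N)}_{\phi(H)}\cap\phi(\Gamma)=\phi(\ln{N}_H)\cap\phi(\Gamma)\supseteq\phi(\ln{N}_H\cap\Gamma)=\bar N^H$, which is exactly your "generator-free" variant, while your element-wise version with representatives $\gamma=\prod_i n_i^{h_i}$ is just an unwound form of it. No gaps; the key points (pushing conjugates through the homomorphism and using $\phi|_\Gamma=\mathrm{Id}$ so that $\phi(N)=N$ and $\phi(\gamma)=\gamma$) are all present.
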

\begin{proof}
Let $\phi:H\to_\Gamma G$ be a $\Gamma$-morphism. Then 
$$
\bar N^G=\ln{N}_G\cap \Gamma\supseteq \ln{\phi(N)}_{\phi(H)}\cap\phi(\Gamma)=\\
\phi(\ln{N}_H)\cap\phi(\Gamma)\supseteq \phi(\ln{N}_H\cap \Gamma)=\bar N^H
$$
\end{proof}
Let $G$ be a $\Gamma$-group. Denote 
$$
\FP(G)=\{H\;:\;H\mbox{ is a finitely presented $\Gamma$-group and } 
\exists \phi: H\to_\Gamma G\},
$$ 
here we call a $\Gamma$-group $H$ finitely presented if it is finitely presented 
relative to $\Gamma$, that is, $H=(\Gamma*F)/M$, where $F$ is a free group of finite rank
and $M\triangleleft (Gamma*F)$ is finitely generated (as a normal subgroup).   
\begin{lemma}\label{lm_fp}
Let $G$ be a $\Gamma$-group and $N\triangleleft \Gamma$. Then 
$$
\bar{N}^G=\bigcup_{H\in\FP(G)} \bar{N}^H
$$ 
\end{lemma}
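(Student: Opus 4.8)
The plan is to prove the two inclusions separately; the easy one is $\bigcup_{H\in\FP(G)}\bar N^H\subseteq\bar N^G$, which is immediate --- every $H\in\FP(G)$ comes equipped with a $\Gamma$-morphism $H\to_\Gamma G$, so Lemma~\ref{lm_prec} gives $\bar N^H\subseteq\bar N^G$, and hence the whole union lies in $\bar N^G$. All the content is in the reverse inclusion, which I would obtain by manufacturing, for each element of $\bar N^G$, a suitable finitely presented $\Gamma$-group that maps to $G$.

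So fix $g\in\bar N^G=\Gamma\cap\ln{N}_G$. Membership in the normal closure means $g=\prod_{i=1}^m n_i^{g_i}$ for some $n_i\in N$ and $g_i\in G$. Let $F$ be the free group on symbols $t_1,\dots,t_m$, form the free product $\Gamma * F$, put $w=g^{-1}\prod_{i=1}^m n_i^{t_i}\in\Gamma * F$, and set $H=(\Gamma * F)/\ln{w}_{\Gamma * F}$. By construction $H$ is finitely presented relative to $\Gamma$: finitely many free generators and a single defining relator. The substitution $t_i\mapsto g_i$ together with $\mathrm{id}_\Gamma$ defines a homomorphism $\Gamma * F\to G$ under which $w\mapsto g^{-1}\prod_{i=1}^m n_i^{g_i}=1$; it therefore annihilates $\ln{w}_{\Gamma * F}$ and factors as $\Gamma * F\to H\to G$, with $H\to G$ restricting to the identity on $\Gamma$. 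Since the composite $\Gamma\hookrightarrow\Gamma * F\to H\to G$ is the original embedding $\Gamma\hookrightarrow G$, the map $\Gamma\to H$ is injective, so $H$ is a bona fide $\Gamma$-group and $H\to_\Gamma G$ is a $\Gamma$-morphism; thus $H\in\FP(G)$. In $H$ the relation $w=1$ reads $g=\prod_{i=1}^m n_i^{t_i}$, which exhibits $g$ as a product of $H$-conjugates of elements of $N$, so $g\in\ln{N}_H$; since also $g\in\Gamma$ we get $g\in\bar N^H\subseteq\bigcup_{H\in\FP(G)}\bar N^H$, as required.

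The one point needing care --- the ``main obstacle'', modest as it is --- is verifying that $\Gamma$ still embeds in the quotient $H$, i.e.\ that adjoining the relator $w$ does not collapse $\Gamma$; this is precisely what factoring through $G$ secures, and it is the reason the presentation has to be read off a genuine expression of $g$ inside $G$ rather than an abstract word. The remaining verifications are routine: that $w$ maps to $1$ in $G$ is the displayed computation, and finite presentability of $H$ relative to $\Gamma$ is read directly off the form of its presentation.
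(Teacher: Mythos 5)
Your proof is correct. Both you and the paper handle the easy inclusion identically via Lemma~\ref{lm_prec}, and both prove the hard inclusion by manufacturing a finitely presented witness from a finite expression $g=\prod_{i=1}^m n_i^{g_i}$; the difference is in how the witness is built. The paper fixes a presentation $G=(\Gamma*F_\infty)/M$, writes the expression inside $\Gamma*F_\infty$, and takes the sub-presentation on the finitely many letters involved, keeping those finitely many relators $w_i$ that lie in $M$ --- so its $H$ genuinely remembers a fragment of $G$'s relations, and the $\Gamma$-morphism $H\to_\Gamma G$ is induced by the quotient map. You instead adjoin fresh free letters $t_1,\dots,t_m$ for the conjugators and impose the single relator $w=g^{-1}\prod n_i^{t_i}$, getting a relative one-relator group whose map to $G$ is the substitution $t_i\mapsto g_i$. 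Your construction is slightly cleaner in that it needs no presentation of $G$ at all and isolates exactly the one point that requires an argument (injectivity of $\Gamma\to H$, secured in both proofs by factoring the original embedding $\Gamma\hookrightarrow G$ through $H$); the paper's version has the mild advantage of exhibiting $H$ as an honest ``partial quotient'' sitting between $\Gamma*F_k$ and $G$. Either argument establishes the lemma.
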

\begin{proof}
Lemma~\ref{lm_prec} shows that $\bar{N}^G$ contains $\bigcup\limits_{H\in\FP(G)} \bar{N}^H$. 
Now, suppose, that $w\in \bar{N}^G$. Consider $G=\Gamma*F_\infty/M$, where 
$F_\infty$ is a free group of countable rank. 
Then $w$ may be presented in $\Gamma*F_\infty$ as a finite product $w=w_1w_2...w_r$, where
each $w_i$ either in $M$ or equal to a conjugate (in $\Gamma*F_\infty$) 
of some element of $N$.
Let $F_k<F_\infty$ be a free group generated by all elements of $F_\infty$ appearing in 
$w_i$, $i=1,\dots,r$. Let $\tilde M\triangleleft \Gamma*F_k$ be generated by
$\{w_1,\dots, w_r\}\cap M$. Notice, that $H=(\Gamma*F_k)/\tilde M$ is a $\Gamma$-group 
(as it has less relations then $G$). It follows that
$H\in\FP(G)$ and $w\in \bar{N}^H$.
\end{proof}

Recall, that $\ls{\bar a}$ denotes the free group generated by $\bar a=a_1,\dots,a_r$.
Let $\cK$ be a class of finite groups. 
Consider the set of all homomorphisms $\phi:\ls{\bar a}\to H$, for all $H\in\cK$.
This set is countable and we may enumerate it, say $\phi_i:\ls{\bar a}\to H_i$. 
Let $\cK(\ls{\bar a})=\prod\limits_{i=1}^\infty H_i$. 
Then $\phi_i$ define homomorphism $\phi_\infty:\ls{\bar a}\to \cK(\ls{\bar a})$, 
$\phi_\infty(w)=(\phi_1(w),\phi_2(w),\dots,\phi_j(w),\dots)$.
A free group $\ls{\bar a}$ is residually $\cK$ if $\phi_\infty$ is an injection. 
Let us denote by $\cK'(\ls{\bar a})$ the closure of 
$\phi_\infty(\ls{\bar a})$ with respect to the product
topology.

\begin{theorem}\label{th_approx1}
Let $\cK\subseteq Fin$ such that $\cK.approx=\prod(\cK).approx$. 
Let a free group $\ls{\bar a}$ be residually $\cK$.
Consider $\cK(\ls{\bar a})$ as an $\ls{\bar a}$-group. 
Then $\ls{\bar a}/N$ is $\cK$-approximable  if and only if $N=\bar N^{\cK(\ls{\bar a})}$.

Suppose, in addition, that $\cK$ is closed under taking subgroups. 
Then $\ls{\bar a}/N$ is $\cK$-approximable  
if and only if $N=\bar N^{\cK'(\ls{\bar a})}$.
\end{theorem}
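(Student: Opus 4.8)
The plan is to derive Theorem~\ref{th_approx1} from the machinery already assembled: Proposition~\ref{prop_weak_esen} (which translates $\cK$-approximability of $\ls{\bar a}/N$ into $N$ being a $\cK$-separated normal subgroup of $\ls{\bar a}$), Proposition~\ref{prop_direct} (a quotient of a closed subgroup of a direct product of compact groups from $\cK$ is $\prod(\cK)$-approximable), and the hypothesis $\cK.approx=\prod(\cK).approx$, together with the $\Gamma$-group lemmas of Section~\ref{sec3} with $\Gamma=\ls{\bar a}$.

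\textbf{First implication: $N=\bar N^{\cK(\ls{\bar a})}$ implies $\ls{\bar a}/N$ is $\cK$-approximable.} If $N=\bar N^{\cK(\ls{\bar a})}=\ls{\bar a}\cap\ln{N}_{\cK(\ls{\bar a})}$, then by the discussion around diagram~\eqref{diagram1} we have an injection $\ls{\bar a}/N\hookrightarrow \cK(\ls{\bar a})/\ln{N}_{\cK(\ls{\bar a})}$. Since $\cK(\ls{\bar a})=\prod_i H_i$ is a direct product of finite (hence compact) groups from $\cK$, Proposition~\ref{prop_direct} applies with $X=G=\cK(\ls{\bar a})$ (taking $X$ to be the whole product, which is closed and has $\Pr_i(X)=H_i\in\cK$): the quotient $\cK(\ls{\bar a})/\ln{N}_{\cK(\ls{\bar a})}$ is $\prod(\cK)$-approximable. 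A subgroup of a $\prod(\cK)$-approximable group is $\prod(\cK)$-approximable, so $\ls{\bar a}/N$ is $\prod(\cK)$-approximable, hence $\cK$-approximable by the hypothesis $\cK.approx=\prod(\cK).approx$.

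\textbf{Converse: $\ls{\bar a}/N$ $\cK$-approximable implies $N=\bar N^{\cK(\ls{\bar a})}$.} One inclusion, $N\subseteq\bar N^{\cK(\ls{\bar a})}$, is automatic since $\phi_\infty(N)$ normally generates a subgroup whose intersection with $\ls{\bar a}$ contains $N$ (indeed $\phi_\infty$ is a homomorphism). For the reverse, suppose $w\in\ls{\bar a}\setminus N$; I want a finite group in $\cK$ witnessing $w\notin\ln{N}_{\cK(\ls{\bar a})}$, i.e. $\phi_\infty(w)\notin\ln{\phi_\infty(N)}_{\cK(\ls{\bar a})}$. By Proposition~\ref{prop_weak_esen}, $\cK$-approximability of $\ls{\bar a}/N$ gives that $N$ is $\cK$-separated in $\ls{\bar a}$: for $n=1$, $Y=\{w\}$, and any finite $\Phi\subset N$, there is a homomorphism $\psi:\ls{\bar a}\to H\in\cK$ with $\psi(w)\notin C_1(\psi(\Phi),H)=\psi(\Phi)^H$, in particular $\psi(w)\neq 1$ while $\psi$ kills... — here is the subtlety: a single homomorphism need not kill all of $\Phi$. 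The remedy is to use Lemma~\ref{lm_fp} to reduce to a finitely presented quotient: write $G=\ls{\bar a}/N$ and use that $w\in\bar N^{\cK(\ls{\bar a})}$ would, by Lemma~\ref{lm_fp}, already hold in some finitely presented $\ls{\bar a}$-group $H\in\FP(\cK(\ls{\bar a}))$, where $\ln{N}_H$ is normally generated by finitely many elements $\Phi\subset N$; then apply $\cK$-separatedness with that finite $\Phi$ and $n=1$ to produce $\psi:\ls{\bar a}\to H'\in\cK$ with $\psi(\Phi)=\{1\}$ (note $C_1(X,H')$ with $X=\{1\}$ is $\{1\}$) and $\psi(w)\neq 1$; this $\psi$ factors through $H$ and contradicts $w\in\bar N^H$. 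Finally $\psi$ is one of the enumerated $\phi_i$, so $\Pr_i\phi_\infty(w)=\psi(w)\neq 1=\Pr_i$ of everything in $\ln{\phi_\infty(N)}$, giving $w\notin\bar N^{\cK(\ls{\bar a})}$.

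\textbf{The closed-subgroup version.} When $\cK$ is additionally closed under subgroups, I replace $\cK(\ls{\bar a})$ by $\cK'(\ls{\bar a})$, the closure of $\phi_\infty(\ls{\bar a})$ in the product topology. Note $\cK'(\ls{\bar a})$ is a closed subgroup of $\prod_i H_i$ with $\Pr_i(\cK'(\ls{\bar a}))\leq H_i$, hence in $\cK$ by the subgroup-closure assumption, so Proposition~\ref{prop_direct} still applies to give that any quotient of $\cK'(\ls{\bar a})$ is $\prod(\cK)$-approximable. The key point is that $\bar N^{\cK(\ls{\bar a})}=\bar N^{\cK'(\ls{\bar a})}$ for every $N\triangleleft\ls{\bar a}$: since $\phi_\infty(\ls{\bar a})$ is dense in $\cK'(\ls{\bar a})$, the projections $\Pr_1^k$ already separate the relevant data, and the same finite-stage argument as in Proposition~\ref{prop_direct}'s proof shows $w\in\ln{\phi_\infty(N)}_{\cK'(\ls{\bar a})}\cap\phi_\infty(\ls{\bar a})$ iff $w\in\ln{\phi_\infty(N)}_{\cK(\ls{\bar a})}\cap\phi_\infty(\ls{\bar a})$. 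Then the two displayed equivalences follow by repeating the two implications above verbatim with $\cK'$ in place of $\cK$. \textbf{The main obstacle} I anticipate is exactly the point flagged in the converse: passing from the one-homomorphism-at-a-time output of $\cK$-separatedness (Proposition~\ref{prop_weak_esen}) to a single homomorphism that simultaneously trivializes a finite generating set of $\ln{N}_H$ while keeping $w$ nontrivial — this is where Lemma~\ref{lm_fp} (finite presentability relative to $\Gamma$) does the essential work, and the hypothesis $\cK.approx=\prod(\cK).approx$ is what lets the first implication close the loop without worrying that $\cK(\ls{\bar a})/\ln{N}$ is only $\prod(\cK)$-approximable a priori.
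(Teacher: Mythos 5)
Your first implication ($N=\bar N^{\cK(\ls{\bar a})}$ implies $\cK$-approximability) is correct and is exactly the paper's argument: diagram~(\ref{diagram1}), Proposition~\ref{prop_direct} applied to the full product, and the hypothesis $\cK.approx=\prod(\cK).approx$.

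The converse, however, contains a genuine gap, and it is precisely at the point you flag as "the subtlety." You restrict to $n=1$ and then claim that $\cK$-separatedness, after a reduction via Lemma~\ref{lm_fp}, produces a homomorphism $\psi:\ls{\bar a}\to H'\in\cK$ with $\psi(\Phi)=\{1\}$ and $\psi(w)\neq 1$. Definition~\ref{def_separ} gives no such thing: it only guarantees that $\psi(w)\notin C_n(\psi(\Phi),H')$, and nothing forces $\psi$ to kill $\Phi$ (if it did, $\ls{\bar a}/N$ would be residually $\cK$, which is far stronger than $\cK$-approximable and is false in general, e.g.\ for sofic non-residually-finite groups). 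The subsequent step "this $\psi$ factors through $H$" is also unjustified: extending $\psi$ from $\ls{\bar a}$ to the finitely presented $\ls{\bar a}$-group $H=(\ls{\bar a}*F_k)/\tilde M$ requires solving the relations of $\tilde M$ in $H'$, which is exactly the nontrivial "equations over groups" content of Section~\ref{sec_equation}, not a free lunch. The correct argument needs no killing of $\Phi$ and no Lemma~\ref{lm_fp}: if $y\in\ln{N}_{\cK(\ls{\bar a})}\cap\ls{\bar a}$ with $y\notin N$, then $y\in C_n(X,\cK(\ls{\bar a}))$ for some $n$ and some finite $X\subset N$; projecting onto the $i$-th coordinate gives $\phi_i(y)\in C_n(\phi_i(X),H_i)$ for \emph{every} $i$, and since by construction the pairs $(H_i,\phi_i)$ exhaust all homomorphisms from $\ls{\bar a}$ to groups in $\cK$, this contradicts the $\cK$-separatedness of $N$ for that particular $n$, $Y=\{y\}$, $\Phi=X$. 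Your use of $n=1$ discards exactly the information needed. Finally, for the second part you assert $\bar N^{\cK(\ls{\bar a})}=\bar N^{\cK'(\ls{\bar a})}$ by a density argument you do not carry out; the inclusion you actually need in the forward direction, $\bar N^{\cK'(\ls{\bar a})}\subseteq\bar N^{\cK(\ls{\bar a})}$, is immediate from Lemma~\ref{lm_prec}, and the paper avoids the reverse inclusion entirely by running the first part's two implications separately (the full equality is only noted as a remark, requiring an $\ls{\bar a}$-retraction $\cK(\ls{\bar a})\to_{\ls{\bar a}}\cK'(\ls{\bar a})$).
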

\begin{proof}
We start with the first part of the theorem.

$\Longrightarrow$. Let $\ls{\bar a}/N$ be $\cK$-approximable. Suppose, searching for 
a contradiction,
that there exists $y\in\ls{\bar a}\setminus N$ such that $y\in \ln{N}_{\cK(\ls{\bar a})}$.
It follows that $y\in C_n(X,\cK(\ls{\bar a}))$ for some $n\in\N$ and some 
$X\subset_{fin} N$. Using definition of $\cK(\ls{\bar{a}})$ and applying projection on $H_i$ we get $\phi_i(y)\in C_n(\phi_i(X),H_i)$.
A contradiction with 
Proposition~\ref{prop_weak_esen} as $(H_i,\phi_i)$ run over all pairs 
$H\in\cK$, $\phi:\ls{\bar a}\to H$. 

$\Longleftarrow$. Let $N=\bar N^G$. By Diagram~(\ref{diagram1}) we see that 
$\ls{\bar a}/N$ is a subgroup of $Q=G/\ln{N}_G$. 
$Q$ is $\prod(\cK)$-approximable by Proposition~\ref{prop_direct}.
We are done by the hypothesis $\cK.approx=\prod(\cK).approx$ and the fact that subgroups of $\cK$-approximable groups are $\cK$-approximable.

The second part of the theorem.  Let $G=\cK(\ls{\bar a})$ and $H=\cK'(\ls{\bar a})$. 
As $H\hookrightarrow_{\ls{\bar a}}G$ one has
$\bar N^H\subseteq \bar N^G$ by Lemma~\ref{lm_prec}. So, if $\ls{\bar a}/N$ is 
$\cK$-approximable then $N\subseteq \bar N^H\subseteq \bar N^G=N$ by the first part of the theorem.
On the other hand, if $\bar N^H=N$ then $\ls{\bar a}/N\hookrightarrow H/\ln{N}_H$.
By Proposition~\ref{prop_direct} $H/\ln{N}_H$ is $\prod(\cK)$-approximable as well as all of its
subgroups. We are done by the hypothesis $\cK.approx=\prod(\cK).approx$.

{\bf Remark} In fact, it is possible to show that $\bar N^H=\bar N^G$. Indeed, under assumption of the theorem
there exists a $\ls{\bar a}$-morphism $G\to_{\ls{\bar a}} H$.
\end{proof}
\section{Soficity and equations over groups}\label{sec_equation}

Let us consider the concepts given in Definition~\ref{def_sys} of the introduction. A system from $Sys(\cK)$ is
said to be a $\cK$-system.  
It is clear, that if a system is solvable in $G$ then it is solvable in any quotient of $G$.
Also, any $\cK$-system is solvable in any direct product of
 groups
from $\cK$. Using Proposition~\ref{prop_ultr}  and Lemma~\ref{lm_w_and_m} we obtain the following proposition,
see \cite{Pestov1}.
\begin{proposition}\label{prop_sys1}
If $G$ is $\cK$-approximable  then any $\cK$-system is solvable over $G$.
\end{proposition}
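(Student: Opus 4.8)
The plan is to unwind the definitions and reduce the statement to the characterization of $\cK$-approximable groups as subgroups of metric ultraproducts, Proposition~\ref{prop_ultr} (together with Lemma~\ref{lm_w_and_m}, which lets us pass freely between $\cK$-approximation and $(\cK,\cL)$-approximation). First I would fix a system $\bar w=(w_1,\dots,w_n)$ with $\bar w\in Sys(\cK)$, an arbitrary tuple $\bar a\in G^r$ of constants, and show we can find a group $H>G$ and $\bar x\in H^k$ solving $\bar w(\bar a,\bar x)=1$. By Lemma~\ref{lm_w_and_m} and Proposition~\ref{prop_ultr}, since $G$ is $\cK$-approximable there is an injection $G\hookrightarrow \prod_\omega H_i$ with $H_i\in\cK$, $\|\cdot\|_i\in\cL_{H_i}$, and a non-principal ultrafilter $\omega$. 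Take $H=\prod_\omega H_i$; then it suffices to solve the system inside this metric ultraproduct, because $G<H$.

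The key step is then solvability in the metric ultraproduct. Lift $\bar a$ to a representative sequence $(\bar a^{(i)})_i$ with $\bar a^{(i)}\in H_i^r$. Since $\bar w\in Sys(\cK)$ and each $H_i\in\cK$, the sentence $\forall\bar a\,\exists\bar x\,\bigwedge_j w_j(\bar a,\bar x)=1$ holds in $H_i$, so for each $i$ there is $\bar x^{(i)}\in H_i^k$ with $w_j(\bar a^{(i)},\bar x^{(i)})=1$ in $H_i$ for all $j$. Let $\bar x\in H^k$ be the class of $(\bar x^{(i)})_i$. Because each $w_j$ is a fixed word, evaluation commutes with the quotient map $\prod_i H_i\to\prod_\omega H_i$, so $w_j(\bar a,\bar x)=1$ in $\prod_\omega H_i$ for every $j$. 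Hence $\bar x$ solves $\bar w(\bar a,\bar x)=1$ over $G$ inside $H$, as required.

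The only point demanding a little care — and the one I expect to be the main obstacle to state cleanly rather than prove — is the bookkeeping around the components: the ultrafilter index set is $\N$, so one should note that the enumeration of factors $H_i$ underlying Proposition~\ref{prop_ultr} is over $\N$ and that choosing one solution $\bar x^{(i)}$ per index is legitimate (a routine choice, finite data at each index). One should also observe that the conclusion does not require $\bar x\in G^k$, only $\bar x\in H^k$ with $H>G$, which is exactly what Definition~\ref{def_sys} asks for in the notion of a system being solvable \emph{over} $G$; this is what makes the argument go through even though $G$ itself need not satisfy the $\forall\exists$-sentence. Finally, I would remark (as the statement already hints by citing \cite{Pestov1}) that the same proof shows more: any $\cK$-system is in fact solvable in $\prod_\omega H_i$ itself, and since quotients and direct products of groups in $\cK$ again satisfy every $\cK$-system, the passage through the ultraproduct is the natural common refinement.
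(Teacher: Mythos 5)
Your proposal is correct and follows essentially the same route as the paper: the paper's (sketched) proof also invokes Lemma~\ref{lm_w_and_m} and Proposition~\ref{prop_ultr} to embed $G$ in a metric ultraproduct of groups from $\cK$, and uses that a $\cK$-system is solvable in a direct product of groups from $\cK$ and hence in any quotient of it, which is exactly your coordinatewise lifting argument. The quantifier bookkeeping you flag (fixing $H=\prod_\omega H_i$ once, before quantifying over $\bar a\in G^r$) is handled correctly.
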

The following lemma is trivial.
\begin{lemma}\label{lm_sys1}
$\ls{a_1,\dots,a_r,x_1,\dots,x_n\;|\;\bar{w}(\bar{a},\bar{x}}\in \FP(\cK(\ls{\bar a}))$
if and only if $\bar w\in Sys(\cK)$
\end{lemma}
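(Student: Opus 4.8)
The plan is simply to unwind the definitions on both sides. Write $H$ for the $\ls{\bar a}$-group presented, relative to $\ls{\bar a}$, by the extra generators $\bar x$ and the relators $\bar w$; since $\bar x$ and $\bar w$ are finite, $H$ is finitely presented relative to $\ls{\bar a}$ in the sense of Section~\ref{sec3}. Recall that $\cK(\ls{\bar a})=\prod_{i=1}^\infty H_i$, where $\phi_i:\ls{\bar a}\to H_i$ enumerates all homomorphisms from $\ls{\bar a}$ into groups of $\cK$, and that the $\ls{\bar a}$-structure on $\cK(\ls{\bar a})$ is given by $\phi_\infty=(\phi_1,\phi_2,\dots)$. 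Membership $H\in\FP(\cK(\ls{\bar a}))$ means, besides finite presentability, that there is an $\ls{\bar a}$-morphism $\psi:H\to_{\ls{\bar a}}\cK(\ls{\bar a})$. As $\ls{\bar a}$ is residually $\cK$ here, the map $\phi_\infty$ is injective; composing it through any such $\psi$ shows that $\ls{\bar a}\hookrightarrow H$ is injective, so $H$ is indeed an $\ls{\bar a}$-group.

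For the forward implication, suppose $\psi:H\to_{\ls{\bar a}}\cK(\ls{\bar a})$ is given, and fix any $G\in\cK$ together with an arbitrary tuple $\bar g\in G^{|\bar a|}$. The tuple $\bar g$ determines a homomorphism $\phi:\ls{\bar a}\to G$, and $(G,\phi)$ occurs as $(H_{i_0},\phi_{i_0})$ for some index $i_0$. The composite $\Pr_{i_0}\circ\psi:H\to G$ restricts on $\ls{\bar a}$ to $\Pr_{i_0}\circ\phi_\infty=\phi_{i_0}=\phi$, so it carries $\bar a$ to $\bar g$; putting $\bar h:=(\Pr_{i_0}\circ\psi)(\bar x)$ and applying this homomorphism to the defining relations $\bar w(\bar a,\bar x)=1$ of $H$ yields $\bar w(\bar g,\bar h)=1$ in $G$. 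Since $\bar g$ was arbitrary, $\bar w$ is solvable in $G$, and since $G\in\cK$ was arbitrary, $\bar w\in Sys(\cK)$.

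For the converse, suppose $\bar w\in Sys(\cK)$. For each $i$ the group $H_i$ lies in $\cK$, so $\bar w$ is solvable in $H_i$; instantiating the defining sentence at the particular tuple $\phi_i(\bar a)\in H_i^{|\bar a|}$ produces a tuple $\bar h_i\in H_i^{|\bar x|}$ with $\bar w(\phi_i(\bar a),\bar h_i)=1$. Hence the assignment $\bar a\mapsto\phi_i(\bar a)$, $\bar x\mapsto\bar h_i$ extends to a homomorphism $\psi_i:H\to H_i$ with $\psi_i|_{\ls{\bar a}}=\phi_i$. Assembling these, $\psi:=(\psi_i)_i:H\to\prod_i H_i=\cK(\ls{\bar a})$ satisfies $\psi|_{\ls{\bar a}}=\phi_\infty$, i.e.\ $\psi$ is an $\ls{\bar a}$-morphism, and as noted $\phi_\infty$ injective forces $\ls{\bar a}\hookrightarrow H$ to be injective; together with finite presentability this gives $H\in\FP(\cK(\ls{\bar a}))$.

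The whole argument is a matter of chasing through Definitions~\ref{def_sys} and \ref{def_Fgroup} and the construction of $\cK(\ls{\bar a})$; there is no real obstacle. The only points requiring a moment's attention are keeping the distinction between solvability \emph{in} and solvability \emph{over} $G$ straight --- here the witnesses lie inside $G$ itself, which is precisely what $\bar w\in Sys(G)$ asks --- and checking that $H$ is genuinely an $\ls{\bar a}$-group, which is where residual $\cK$-ness of $\ls{\bar a}$ is used.
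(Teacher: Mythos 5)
Your proof is correct: the paper itself offers no argument (it simply declares the lemma trivial), and your unwinding of Definition~\ref{def_sys}, Definition~\ref{def_Fgroup} and the construction of $\cK(\ls{\bar a})$ — including the observation that residual $\cK$-ness makes $\ls{\bar a}\hookrightarrow H$ injective so that $H$ really is an $\ls{\bar a}$-group — is exactly the intended verification.
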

This lemma, Theorem~\ref{th_approx1}, and Proposition~\ref{prop_sys1} yield the following corollary.
\begin{corollary}\label{cor_sys1}
Let $\cK\subseteq Fin$ and $\cK.approx=\prod(\cK).approx$. Then a group $G$ is 
$\cK$-approximable if and only
if any $\cK$-system is solvable over $G$.
\end{corollary}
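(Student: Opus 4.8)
The plan is to prove the two implications of the corollary separately. The ``only if'' implication --- if $G$ is $\cK$-approximable then every $\cK$-system is solvable over $G$ --- is literally the statement of Proposition~\ref{prop_sys1}, so nothing needs to be done there. All the work is in the ``if'' implication, and for that I would first reduce to the finitely generated case. Indeed, $\cK$-approximability is a \emph{local} property, so it suffices to show that every finitely generated subgroup $G_0<G$ is $\cK$-approximable; and if a system $\bar w$ is solvable over $G$, witnessed by some $H>G$, then it is solvable over $G_0$ as well, with the \emph{same} $H$ (since $G_0^r\subseteq G^r$ and $G_0<H$). Hence I may assume $G=\ls{\bar a}/N$ for a free group $\ls{\bar a}$ of finite rank.

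By Theorem~\ref{th_approx1} it then suffices to prove $N=\bar N^{\cK(\ls{\bar a})}$, the inclusion $N\subseteq\bar N^{\cK(\ls{\bar a})}$ being trivial. To get the reverse inclusion I would invoke the finite-presentation reduction: Lemma~\ref{lm_fp} gives $\bar N^{\cK(\ls{\bar a})}=\bigcup_{H\in\FP(\cK(\ls{\bar a}))}\bar N^H$, and Lemma~\ref{lm_sys1} identifies the groups $H$ occurring here exactly with the groups $H_{\bar w}=\ls{\bar a,\bar x\mid\bar w(\bar a,\bar x)}$ for $\bar w\in Sys(\cK)$ --- every finitely presented $\ls{\bar a}$-group has this shape, and it admits an $\ls{\bar a}$-morphism into $\cK(\ls{\bar a})$ precisely when $\bar w$ is a $\cK$-system. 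So it is enough to check that $\bar N^{H_{\bar w}}=N$ for each $\bar w\in Sys(\cK)$.

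This last point is the heart of the matter, and it is where the hypothesis enters. Fix $\bar w\in Sys(\cK)$; by assumption $\bar w$ is solvable over $G$, so there is an overgroup $H>G$ such that, substituting for the constants $\bar a$ the images in $G$ of the free generators, the equations $\bar w=1$ have a solution $\bar x_0\in H^k$. Sending $\bar a$ to those images and $\bar x$ to $\bar x_0$ defines a homomorphism $\ls{\bar a,\bar x}\to H$ that kills $\bar w$ and kills $N$, hence factors through $H_{\bar w}/\ln{N}_{H_{\bar w}}$; on the subgroup $\ls{\bar a}$ the resulting map is the composite $\ls{\bar a}\twoheadrightarrow G\hookrightarrow H$, whose kernel is exactly $N$. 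Therefore the kernel of $\ls{\bar a}\to H_{\bar w}/\ln{N}_{H_{\bar w}}$ is contained in $N$, that is, $\bar N^{H_{\bar w}}=\ln{N}_{H_{\bar w}}\cap\ls{\bar a}=N$. Taking the union over all $\bar w\in Sys(\cK)$ gives $\bar N^{\cK(\ls{\bar a})}=N$, and Theorem~\ref{th_approx1} concludes that $G$ is $\cK$-approximable.

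I do not expect a deep obstacle --- this is a corollary and the argument is mostly assembly --- but two places need care. First, one must not conflate the model-theoretic content of ``$\bar w$ solvable over $G$'' (an assertion about \emph{some} overgroup) with the internal algebra of $H_{\bar w}$; the bridge is precisely the factorization above, and the point to get right is that it forces $\bar N^{H_{\bar w}}\subseteq N$, not merely that $\bar N^{H_{\bar w}}$ is ``small''. Second, Theorem~\ref{th_approx1} is stated under the hypothesis that $\ls{\bar a}$ is residually $\cK$; this holds for $\cK=Nil,Sol,Fin,Alt,Sym$, but can fail for a general $\cK\subseteq Fin$, and in that case one should re-run the ``$\Leftarrow$'' half of the proof of Theorem~\ref{th_approx1} with the canonical map $\phi_\infty:\ls{\bar a}\to\cK(\ls{\bar a})$ in place of an embedding: the equality $N=\phi_\infty^{-1}(\ln{\phi_\infty(N)})$ still yields $\ls{\bar a}/N\hookrightarrow\cK(\ls{\bar a})/\ln{\phi_\infty(N)}$, which is $\prod(\cK)$-approximable by Proposition~\ref{prop_direct}, and the hypothesis $\cK.approx=\prod(\cK).approx$ then finishes.
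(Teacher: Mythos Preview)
Your argument is correct and follows the route the paper intends: the paper's proof is the single sentence ``This lemma, Theorem~\ref{th_approx1}, and Proposition~\ref{prop_sys1} yield the following corollary,'' and you have unpacked exactly that assembly (with Lemma~\ref{lm_fp} supplying the passage from $\bar N^{\cK(\ls{\bar a})}$ to the finitely presented $H_{\bar w}$, which the paper leaves implicit). Your observation about the residual-$\cK$ hypothesis is a genuine point the paper glosses over --- Theorem~\ref{th_approx1} is stated only for $\ls{\bar a}$ residually $\cK$, while Corollary~\ref{cor_sys1} makes no such assumption --- and your proposed fix (rerunning the $\Longleftarrow$ direction with the not-necessarily-injective $\phi_\infty$, after noting that laws of $\cK$ give variable-free $\cK$-systems and hence $\ker\phi_\infty\subseteq N$) is the right repair.
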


\section{Algebraic groups}\label{sec_algebraic_groups}

Here we study algebraic groups over algebraically closed fields (AGCF).

\begin{proposition} \label{prop_Fin->AGCF}
Any $\bar w\in Sys(Fin)$ is solvable in an AGCF group. 
\end{proposition}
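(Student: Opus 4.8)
The plan is to reduce the statement to the fact that a $Fin$-system, being solvable in every finite group, is in particular solvable in every finite quotient of a relevant profinite group, and then to transfer this to an algebraic group $G$ over an algebraically closed field $k$ by an approximation/specialization argument. Fix $\bar w \in Sys(Fin)$ and a tuple $\bar a \in G^r$; I must produce $\bar x \in G^k$ with $\bar w(\bar a,\bar x)=1$. The first step is to replace $G$ by the Zariski closure $G_0$ of the subgroup $\ls{\bar a}$ generated by the components of $\bar a$; $G_0$ is again an algebraic group over $k$, and it suffices to solve $\bar w=1$ inside $G_0$. The point of passing to $G_0$ is that $\ls{\bar a}$ is a finitely generated linear group, so by Malcev's theorem it is residually finite; I will use this to connect $\bar a$ to genuine finite groups.

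The main idea is a counting/dimension argument on the variety of solutions. Consider the affine variety $V=\{\bar x \in G_0^k : w_i(\bar a,\bar x)=1,\ i=1,\dots,n\}$ defined over $k$ (or over the finitely generated subfield $k_0$ generated by the coordinates of $\bar a$). I want $V\neq\emptyset$. The trick is to use that $\bar w$ has a solution in every finite quotient of $\ls{\bar a}$: by residual finiteness of $\ls{\bar a}$, for every finite subset $S$ of $\ls{\bar a}\setminus\{1\}$ there is a homomorphism $\pi_S:\ls{\bar a}\to Q_S$ onto a finite group with $\pi_S(S)\not\ni 1$, and since $\bar w\in Sys(Fin)$ there is $\bar x_S\in Q_S^k$ with $\bar w(\pi_S(\bar a),\bar x_S)=1$. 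One then either (a) lifts these back, or better (b) works in the profinite completion $\hat{\ls{\bar a}}$, where compactness gives an element $\bar \xi$ of $\hat{\ls{\bar a}}^{\,k}$ with $\bar w(\bar a,\bar\xi)=1$. The remaining task is to descend from the profinite completion to the algebraic group $G_0$: here I would embed $G_0(k)\hookrightarrow GL_m(k)$ and use that the closure of $\ls{\bar a}$ in the congruence topology coming from $GL_m(\mathcal O)$ for a suitable finitely generated ring $\mathcal O\subset k$ surjects onto the relevant finite quotients, so that the profinite solution can be approximated modulo arbitrarily deep congruence ideals and then lifted using Hensel/completeness and smoothness of $G_0$, or alternatively that the system $V$ having points in infinitely many residue fields forces $V$ to have a $\bar k$-point by a Nullstellensatz/spreading-out argument.

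The hard part will be precisely this last descent: the profinite completion $\hat{\ls{\bar a}}$ is generally much larger than any completion visible inside $G_0(k)$, so having a solution "profinitely" does not immediately give one in $G_0$. I expect the clean route is the spreading-out argument: the variety $V$ is defined over a finitely generated $\Z$-algebra $R\subset k$; if $V$ had no $\bar k$-point, then $V=\emptyset$, hence $V_R\otimes_R \F$ is empty for all but finitely many finite residue fields $\F$ of $R$; but for infinitely many maximal ideals $\mathfrak m\lhd R$ the reduction of $\bar a$ lands in a finite subgroup of $GL_m(R/\mathfrak m)$ in which, by $\bar w\in Sys(Fin)$, the system $\bar w(\overline{\bar a},\bar x)=1$ is solvable — giving an $\F$-point of $V_R\otimes \F$ for infinitely many $\F$, a contradiction. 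So $V(\bar k)\neq\emptyset$, and since $k=\bar k$ we get $\bar x\in G_0(k)^k$ as desired. Making the "infinitely many residue fields see a finite quotient through which $\bar w$ is solvable" step precise — i.e. arranging that the reductions $\ls{\bar a}\bmod \mathfrak m$ are finite and that solvability passes down along $R\to R/\mathfrak m$ — is the technical heart, and is where residual finiteness of finitely generated linear groups and a careful choice of the model $V_R$ are used.
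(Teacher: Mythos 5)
Your final ``spreading-out'' argument is essentially the paper's proof: the paper makes the Nullstellensatz certificate $1=\sum g_ih_i$ explicit, descends it to a finitely generated subring $R\subset K$ containing all relevant coefficients, and reduces modulo a single maximal ideal of $R$ (whose residue field is automatically finite) to contradict solvability of $\bar w$ in the finite group $\phi(G)$. The detour in your first two paragraphs through residual finiteness of $\langle\bar a\rangle$ and the profinite completion is unnecessary, as you yourself recognize before settling on the correct route.
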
 
\begin{proof}
The proof is very similar to the proof of Malcev theorem in \cite{Brown1}, 
see  also \cite{Arzhan1}. 

Let $G$ be an algebraic group over algebraically closed field $K$.
Let $\cD\subset K[\bar z]$ be the defining equations $\bar g\in G^r$.
Suppose, that $\bar w(\bar g,\bar x)$ is a system without solution in $G$. 
Let $\cS\subset K[\bar z]$ be the algebraic equations defined by $\bar w$.
By Hilbert's Nullstellensatz $1\in I(\cS)$, where $I(X)$ is an ideal generated by $X$. 
This means that  
there exist $f_1,f_2,..,f_k\in K[\bar{z}]$, and $h_1,h_2,...,h_k \in \cS$ such that 
\begin{equation} \label{eq_id}
1=\sum_{i=1}^k g_ih_i.
\end{equation}
Let $B$ be the set of elements of $K$ appearing in  $g_i$ , $h_i$, $\cD$, $\cS$. 
The set $B$ is finite. Consider the ring $R\subset K$, generated by $B$. Clearly,
Eq.(\ref{eq_id}) holds in $R[\bar{z}]$ and no nontrivial homomorphism $R\to R/I$ sends $1$ 
to $0$. 
Now, there exists
a maximal ideal $I$ of $R$. The corresponding field $R/I$ is finite \cite{Brown1}. 
Let $\phi:R\to R/I$ be the natural morphism. This morphism define a group homomorphism $\phi:G\to\phi(G)$.  
Then the system
$\phi(\cS)$ does not have solutions in $R/I$ and $\bar w(\phi(\bar g),x)$ does not
have solutions in the finite algebraic group $\phi(G)$.  
\end{proof}

\begin{corollary}
Let $A$ be a quotient of a direct product of  AGCF groups. Suppose that $G<A$. 
Then the group $G$ is $Fin$-approximable, that is, $G$ is weakly sofic. 
\end{corollary}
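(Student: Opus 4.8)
The plan is to reduce the statement to Corollary~\ref{cor_sys1} and feed it Proposition~\ref{prop_Fin->AGCF}. Write $A=P/M$ with $P=\prod_{i}L_i$ an (unrestricted) direct product of algebraic groups $L_i$ over algebraically closed fields (possibly different for different $i$) and $M\triangleleft P$. Since $Fin$ is closed under taking subgroups and finite direct products and finite groups are compact, Corollary~\ref{cor_prod1} gives $Fin.approx=\prod(Fin).approx$, so Corollary~\ref{cor_sys1} applies with $\cK=Fin$. It therefore suffices to show that every $\bar w\in Sys(Fin)$ is solvable over $G$.

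So fix $\bar w\in Sys(Fin)$. By Proposition~\ref{prop_Fin->AGCF} the sentence $\forall\bar a\,\exists\bar x\,\bigwedge_{i}w_i(\bar a,\bar x)=1$ holds in each factor $L_i$. Assembling solutions coordinatewise shows $\bar w$ is solvable in $P$; pushing solutions forward along the quotient map $P\to A$ shows $\bar w$ is solvable in $A$ (solvability in a group passes to every quotient). Finally, since $G<A$, Definition~\ref{def_sys} is satisfied with the overgroup $H=A$: for every $\bar a\in G^r\subseteq A^r$ we may choose $\bar x\in A^k$ with $\bigwedge_{i}w_i(\bar a,\bar x)=1$. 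Hence $\bar w$ is solvable over $G$, and as $\bar w\in Sys(Fin)$ was arbitrary, Corollary~\ref{cor_sys1} yields that $G$ is $Fin$-approximable, that is, weakly sofic.

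I do not expect a genuine obstacle here; the argument is essentially quantifier bookkeeping gluing Proposition~\ref{prop_Fin->AGCF} to Corollary~\ref{cor_sys1}. The only points needing (routine) care are that solvability of a finite system in every factor of a direct product yields solvability in the product, that this descends to $A=P/M$, and that solvability in $A$ restricts to solvability over the subgroup $G$ --- all immediate from Definition~\ref{def_sys}, since word values commute with the coordinate projections and with the quotient homomorphism, and a solution tuple in $A$ serves verbatim as a witness for parameters drawn from $G\subseteq A$.
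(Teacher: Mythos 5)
Your proof is correct and is precisely the route the paper intends (the corollary is stated without a written proof, immediately after Proposition~\ref{prop_Fin->AGCF}, and is meant to follow from Corollary~\ref{cor_sys1}): every $Fin$-system is solvable in each AGCF factor, hence coordinatewise in the product, hence in the quotient $A$, hence over $G$ with witness $H=A$. The only cosmetic point is that $Fin.approx=\prod(Fin).approx$ needs no appeal to Corollary~\ref{cor_prod1} --- a finite direct product of finite groups is finite, so $\prod(Fin)=Fin$ outright.
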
   
\begin{corollary}
Let $\cK$ be a class of AGCF groups. Then $\cK.approx\subset Fin.approx$.
\end{corollary}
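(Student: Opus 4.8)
The plan is to read this corollary off directly from the preceding one. Recall that, by Proposition~\ref{prop_ultr} combined with Lemma~\ref{lm_w_and_m} (as noted in the Introduction), every $\cK$-approximable group $G$ is isomorphic to a subgroup of a quotient of an unrestricted direct product of groups from $\cK$. So I would take an arbitrary $G \in \cK.approx$; since here $\cK$ consists of algebraic groups over algebraically closed fields, the above gives $G < A$ for some $A$ that is a quotient of a direct product of AGCF groups. The preceding corollary then says precisely that such a $G$ is $Fin$-approximable, i.e.\ weakly sofic. As $G$ ranged over all of $\cK.approx$, this yields $\cK.approx \subseteq Fin.approx$.

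Alternatively --- and more in the equational spirit of the paper --- I would argue as follows. By Proposition~\ref{prop_Fin->AGCF} every system $\bar w \in Sys(Fin)$ is solvable in each AGCF group, so $Sys(Fin) \subseteq Sys(H)$ for all $H \in \cK$, whence $Sys(Fin) \subseteq Sys(\cK)$. If $G \in \cK.approx$, then by Proposition~\ref{prop_sys1} every $\cK$-system is solvable over $G$, and in particular every $Fin$-system is. Since $\prod(Fin) = Fin$, Corollary~\ref{cor_sys1} applied with the class $Fin$ then gives $G \in Fin.approx$.

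I do not expect any real obstacle at this step: the whole weight of the statement already lies in Proposition~\ref{prop_Fin->AGCF} (Hilbert's Nullstellensatz together with the finiteness of the residue field of a maximal ideal in a finitely generated subring of a field) and in the preceding corollary. The point of recording this corollary separately is only that the form ``$\cK.approx \subseteq Fin.approx$'' is the one that directly generalizes the result of \cite{Arzhan1} that linear sofic groups are weakly sofic.
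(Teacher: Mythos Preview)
Your proposal is correct. The paper states this corollary without proof, directly after the preceding one, so your first argument --- embedding a $\cK$-approximable group into a quotient of a direct product of AGCF groups via Proposition~\ref{prop_ultr} and Lemma~\ref{lm_w_and_m}, and then invoking the preceding corollary --- is exactly the intended derivation. Your alternative equational argument through $Sys(Fin)\subseteq Sys(\cK)$ and Corollary~\ref{cor_sys1} is also valid and amounts to unfolding the proof of the preceding corollary; it is not a genuinely different route, just a slightly more explicit one.
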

Notice, that the proof of Proposition~\ref{prop_Fin->AGCF} essentially uses the algebraic
closeness. For example, $x^2+y^2=-1$ has no solutions in $\R$, but has a solution in any
finite field. The hyperlinear groups are defined as being approximable by finite dimensional unitary groups 
(with normalized Hilbert-Schmidt norm as a length function) \cite{Lupini1, Pestov1}. 
So, the following is an open question.
\begin{question}
Are hyperlinear groups  weakly sofic?  
\end{question}

\section{Proof of Proposition (Some equivalence).}
\begin{description}
\item{(1)$\Longrightarrow$(2).} This is an implication of  Theorem~\ref{th_approx1}.
\item{(2)$\Longrightarrow$(3).} This is trivial.
\item{(3)$\Longrightarrow$(4).}  Given a finite set $\cF$, using small cancellation theory we may find
$H<F$ such that $\ln{H}_F$ avoids $\cF\not\ni 1$ and $H$ has CEP in $F$.
It follows that $\hat F$ has a free subgroup with CEP. So, $\hat F$ is SQ-universal. 
Explicitly, let $A_i=a^{100}b^ia^{101}b^i\dots a^{199}b^i$. Then $H=\ls{A_i,A_{i+1}}$
satisfies our properties for $i$ sufficiently large by results of \cite{Olshanski}.

\item{(4)$\Longrightarrow$(1).} As $\bar w\in Sys(\cK)$ has a solution in any quotient of 
$\hat F$, the system $\bar w$ is solvable over any (countable) group $G$. And we are done by
Corollary~\ref{cor_sys1}.   
\end{description}

{\bf Acknowledgments.}
Some ideas of the work appeared during the ``Word maps and stability of representation'' 
conference
and the author's visit at the  ESI, Vienna, April 2013. 
The author thanks the organizers and participants of the conference,
especially Goulnara Arzhantseva, Jakub Gismatullin, and 
Kate Juschenko for useful discussions.
The author thanks Andreas Thom for pointing out an error in the previous version.
The visit was supported by CRSI22-130435 grant of SNSF.      
Also the work was partially supported by PROMEP grant UASLP-CA21 and 
by grant 14-41-00044 of RSF at the Lobachevsky University of Nizhny Novgorod".


\begin{thebibliography}{10}

\bibitem{AGG}
M.~A. Alekseev, L.~Yu. Glebski{\u\i}, and E.~I. Gordon.
\newblock On approximations of groups, group actions and {H}opf algebras.
\newblock {\em Zap. Nauchn. Sem. S.-Peterburg. Otdel. Mat. Inst. Steklov.
  (POMI)}, 256(Teor. Predst. Din. Sist. Komb. i Algoritm. Metody. 3):224--262,
  268, 1999.

\bibitem{Arzhan2}
G.~N. Arzhantseva.
\newblock Asymptotic approximaions of finitely generated groups.
\newblock In Guillamon~Antoni Ventura, Enric, editor, {\em Research
  Perspectives CRM Barcelona-Fall}, volume~1 of {\em Trends in Mathematics},
  pages 7--16. Springer, 2014.

\bibitem{Arzhan1}
G.~N. Arzhantseva and L.~Paunescu.
\newblock Linear sofic groups and algebras.
\newblock accepted in Transactions of the American Mathematical Society,
  available at http://www.ams.org/cgi-bin/mstrack/accepted\_papers/tran, 2012.

\bibitem{Gis}
Goulnara Arzhantseva and Jacub Gismatullin.
\newblock Personal communication.

\bibitem{Baumslag1}
Gilbert Baumslag, Alexei Myasnikov, and Vladimir Remeslennikov.
\newblock Algebraic geometry over groups. {I}. {A}lgebraic sets and ideal
  theory.
\newblock {\em J. Algebra}, 219(1):16--79, 1999.

\bibitem{Brenner}
J.~L. Brenner.
\newblock Covering theorems for {FINASIG}s. {VIII}. {A}lmost all conjugacy
  classes in {${\cal A}_{n}$} have exponent {$\leq {}4$}.
\newblock {\em J. Austral. Math. Soc. Ser. A}, 25(2):210--214, 1978.

\bibitem{Brick1}
Stephen~G. Brick.
\newblock Normal-convexity and equations over groups.
\newblock {\em Invent. Math.}, 94(1):81--104, 1988.

\bibitem{Brick2}
Stephen~G. Brick.
\newblock Relative normal-convexity and amalgamations.
\newblock {\em Bull. Austral. Math. Soc.}, 44(1):95--107, 1991.

\bibitem{Brown1}
Nathanial~P. Brown and Narutaka Ozawa.
\newblock {\em {$C^*$}-algebras and finite-dimensional approximations},
  volume~88 of {\em Graduate Studies in Mathematics}.
\newblock American Mathematical Society, Providence, RI, 2008.

\bibitem{Lupini1}
Valerio Capraro and Martino Lupini.
\newblock {\em Introduction to {S}ofic and hyperlinear groups and {C}onnes'
  embedding conjecture}, volume 2136 of {\em Lecture Notes in Mathematics}.
\newblock Springer, Cham, 2015.
\newblock With an appendix by Vladimir Pestov.

\bibitem{Coulbois}
Thierry Coulbois and Anatole Khelif.
\newblock Equations in free groups are not finitely approximable.
\newblock {\em Proc. Amer. Math. Soc.}, 127(4):963--965, 1999.

\bibitem{EL}
G{\'a}bor Elek and Endre Szab{\'o}.
\newblock Hyperlinearity, essentially free actions and {$L^2$}-invariants.
  {T}he sofic property.
\newblock {\em Math. Ann.}, 332(2):421--441, 2005.

\bibitem{GR}
Lev Glebsky and Luis~Manuel Rivera.
\newblock Sofic groups and profinite topology on free groups.
\newblock {\em J. Algebra}, 320(9):3512--3518, 2008.

\bibitem{Gordon}
E.~Gordon.
\newblock Persanal communication.

\bibitem{Gr99}
M.~Gromov.
\newblock Endomorphisms of symbolic algebraic varieties.
\newblock {\em J. Eur. Math. Soc. (JEMS)}, 1(2):109--197, 1999.

\bibitem{Higman}
Graham Higman and Elizabeth Scott.
\newblock {\em Existentially closed groups}, volume~3 of {\em London
  Mathematical Society Monographs. New Series}.
\newblock The Clarendon Press, Oxford University Press, New York, 1988.
\newblock Oxford Science Publications.

\bibitem{Howie}
James Howie.
\newblock The {$p$}-adic topology on a free group: a counterexample.
\newblock {\em Math. Z.}, 187(1):25--27, 1984.

\bibitem{Khar_Myas}
Olga Kharlampovich and Alexei Myasnikov.
\newblock Elementary theory of free non-abelian groups.
\newblock {\em J. Algebra}, 302(2):451--552, 2006.

\bibitem{Levin}
Frank Levin.
\newblock Solutions of equations over groups.
\newblock {\em Bull. Amer. Math. Soc.}, 68:603--604, 1962.

\bibitem{Shalev}
Martin~W. Liebeck and Aner Shalev.
\newblock Simple groups, permutation groups, and probability.
\newblock {\em J. Amer. Math. Soc.}, 12(2):497--520, 1999.

\bibitem{Lyndon_Schupp}
Roger~C. Lyndon and Paul~E. Schupp.
\newblock {\em Combinatorial group theory}.
\newblock Classics in Mathematics. Springer-Verlag, Berlin, 2001.
\newblock Reprint of the 1977 edition.

\bibitem{NN}
B.~H. Neumann and Hanna Neumann.
\newblock Embedding theorems for groups.
\newblock {\em J. London Math. Soc.}, 34:465--479, 1959.

\bibitem{Olshanski} 
 Aleksandr Yu.  Ol'shanskii.
\newblock The SQ-universality of hyperbolic groups.
\newblock {\em Sbornik: Mathematics}, 186(8):1199--1211, 1995.

\bibitem{Pestov1}
Vladimir~G. Pestov.
\newblock Hyperlinear and sofic groups: a brief guide.
\newblock {\em Bull. Symbolic Logic}, 14(4):449--480, 2008.

\bibitem{Ra08}
Florin R{\u{a}}dulescu.
\newblock The von {N}eumann algebra of the non-residually finite {B}aumslag
  group {$\langle a,b\vert ab^3a^{-1}=b^2\rangle$} embeds into {$R^\omega$}.
\newblock In {\em Hot topics in operator theory}, volume~9 of {\em Theta Ser.
  Adv. Math.}, pages 173--185. Theta, Bucharest, 2008.

\bibitem{Sela}
Z.~Sela.
\newblock Diophantine geometry over groups. {VI}. {T}he elementary theory of a
  free group.
\newblock {\em Geom. Funct. Anal.}, 16(3):707--730, 2006.

\bibitem{Stal}
John~R. Stallings.
\newblock Surfaces in three-manifolds and nonsingular equations in groups.
\newblock {\em Math. Z.}, 184(1):1--17, 1983.

\bibitem{Thom1}
Andreas Thom.
\newblock About the metric approximation of {H}igman's group.
\newblock {\em J. Group Theory}, 15(2):301--310, 2012.

\bibitem{W00}
Benjamin Weiss.
\newblock Sofic groups and dynamical systems.
\newblock {\em Sankhy\=a Ser. A}, 62(3):350--359, 2000.
\newblock Ergodic theory and harmonic analysis (Mumbai, 1999).

\end{thebibliography}
\end{document}